\newtheorem{proposition}{Proposition}
\newtheorem{theorem}{Theorem}
\newtheorem{lemma}{Lemma}
\newtheorem{corollary}{Corollary}
\newtheorem{remark}{Remark}
\title{Laplacian Energies of Vertices}
\author{J. Guerrero}
\date{\today}                 
\begin{document}
\maketitle

\begin{abstract}
	In this work, we define the Laplacian and Normalized Laplacian energies of vertices in a graph, we derive some of its properties and relate them to combinatorial, spectral and geometric quantities of the graph.
\end{abstract}

\section{Introduction}
	Nowadays, there are many notions of graph energies defined in the literature (see \cite{GutmanFurtula19} for a review of them). In the present work we will focus our interest in the \textit{Laplacian} (\cite{GutmanZhou06}) and \textit{normalized Laplacian} (\cite{CaversFallatKirkland10}) energies.

	We denote a graph by $G=(V,E)$, with $n$ vertices and $m$ edges, $V=\{v_1,v_2,\cdots,v_n\}$ is the vertex set, and $E$ is the set of edges. The \textit{degree} of a vertex $v_i$ is the number of vertices to which it is connected, and is denoted by $d_i$. If there is an edge between two vertices $v,w\in V$ we write $v\sim w.$  We will assume that $G$ is simple and connected.
	
	The \textit{$M-$energy} of a graph $G$ was introduced in \cite{CaversFallatKirkland10}, where $M$ is a matrix associated to $G$. The $M$-energy is defined by
	\[M\mathcal{E}(G):= \sum_{i=1}^{n} \bigg| \lambda_i(M) - \frac{tr(M)}{n} \bigg|= tr \left( \bigg|M- \frac{tr(M)}{n} I\bigg|\right),\]
	where $\lambda_1(M)\leq \lambda_2(M)\leq \cdots \leq \lambda_{n}(M)$ denote the eigenvalues of $M$, and the \textit{absolute value} of a matrix $B$ is defined as $|B|:= (BB^\ast)^{1/2}.$ The definition of $M-$energy is based on the work of Gutman, who in 1978 defined the \textit{energy} of a graph as $\mathcal{E}(G):=tr(|A|)$ (\cite{Gutman78}), where $A$ is the \textit{adjacency matrix} of $G$, defined by
	\[A_{ij} = \begin{cases}
	1 &\text{ if } v_i \sim v_j,\\
	0 & \text{ otherwise.}
	\end{cases}\]
	
	With the above definitioin in hand, the relevant energies for the present work, as their name suggest, are defined using the \textit{Laplacian} $L,$ and \textit{normalized Laplacian} $\mathcal{L},$ matrices:
	\[L := D- A  \text{ and } \mathcal{L} := I-D^{-1/2}AD^{-1/2},
	\]
	where $D$ is the diagonal \textit{degree matrix}, $D_{ii} =d_i.$
	
	Recently, the \textit{energy} of a vertex was defined in \cite{ArizmendiEtAl18}. {It has been not only a valuable tool to improve earlier known bounds on the energy of a graph, but also has shone a light on the behavior of the energy itself.} Given a vertex $v_i \in V,$ its energy is defined as
	\[\mathcal{E}_G(v_i) := |A|_{ii}.\]
	Behind the definition, lies the fact that the trace of a matrix can be decomposed as the sum of the positive linear functionals $\phi_i:M_n(\mathbb{R}) \to \mathbb{R}$ defined by $\phi_i(M) = M_{ii}.$ The positivity of the linear functionals imply, in particular, the Cauchy-Schwarz inequality
	\begin{equation}
		\label{eq:CauchySchwarz}
		|\phi_i(AB)| \leq \phi_i(AA^*)^{1/2}\phi_i(BB^*)^{1/2}.
	\end{equation}
	
	It is immediate from the definition, that $\mathcal{E}(G) = \mathcal{E}_G(v_1)+\mathcal{E}_G(v_2)+\cdots+ \mathcal{E}_G(v_n),$ and so, as it is said in \cite{ArizmendiEtAl18}, the energy of a vertex might be understood as the contribution of each vertex to the total energy of the graph.
	
	The purpose of this work is to define the energy of a vertex associated to the matrices $L$ and $\mathcal{L}$, investigate their properties and relate them to spectral, combinatorial and geometric quantities of the graph. While some of the theorems follow similar developments as the one given for the energy of a vertex \cite{ArizmendiEtAl18, ArizmendiLunaRamirez19, ArizmendiArizmendi21}, by the geometric properties of $L$ and $\mathcal{L}$, different results and new insights are obtained in this paper for their vertex energies.
	
	We present different methods to compute these energies of a vertex: in terms of the eigendecomposition of a matrix (Proposition \ref{prop:EnergySpectral}), in terms of the distribution of the graph with respect to the vertex (Remark \ref{remark:NonCommutativeDistribution}), and in terms of the characteristic polynomial of the matrix (Theorem \ref{thm:CoulsonIntegralFormula}); we show that the laplacian vertex energies satisfy some classical inequalities (Theorems \ref{thm:McClellandLaplacian}, \ref{thm:LowerLaplacian}, \ref{thm:NLEUpperBound}, \ref{thm:NLELowerBound}), and it turns out that they add up to improvements on the bounds for the energy of the graph (Remarks \ref{remark:improvement1}, \ref{remark:improvement2}); we also relate the normalized laplacian vertex energies to some geometric quantities of the graph (Theorems \ref{thm:vertexEnergyAndCheegerConstant}, \ref{thm:vertexEnergyAndCurvature}); finally we show some computations of the vertex energies for two families of graphs.
	
	The content of this paper is organized as follows: in Section \ref{section:MEnergy} a framework is developed which allows us to prove general facts about the energy of vertices; in Section \ref{section:LE} the Laplacian energy of vertices is defined and some inequalities are proved; Section \ref{section:NLE} is about the normalized Laplacian energy of vertices, it is divided in two parts: the first relate it to the degrees of the graph and the second to the Cheeger and dual Cheeger constants and to the Ollivier-Ricci curvature; finally, Section \ref{section:Examples} is devoted to give formulas for the energies of vertices of the Star and Path graphs.

\section{$M$-energy of a Vertex}
\label{section:MEnergy}

	Some of the properties of the vertex energies associated to $L$ and $\mathcal{L}$ will also have analogues for other matrices as well. In this section we will develop a simple frame, but general enough to study the energy of a vertex given by a real symmetric matrix associated to a graph $G$. We follow the lines developed in \cite{ArizmendiEtAl18} and \cite{ArizmendiLunaRamirez19}.
	
	The quantity of interest throughout this section is 
	\[M\mathcal{E}_G(v_i) := \phi_i\left(\bigg|M- \frac{tr(M)}{n} I\bigg|\right) = \bigg|M- \frac{tr(M)}{n} I\bigg|_{ii} ,\]
	where $M$ is a symmetric matrix of size $n \times n,$ associated to $G$.
	The following result shows how to compute it in terms of the spectral decomposition of $M$.
	
	\begin{proposition}
		\label{prop:EnergySpectral}
		Let $M$ be a real symmetric matrix and $M=U\Lambda(M) U^T$ be its spectral decomposition, where $U$ is orthonormal and $\Lambda(M) = diag(\lambda_1(M),\lambda_2(M),\cdots, \lambda_{n}(M)),$ then
		\[M\mathcal{E}_G(v_i) = \sum_{j=1}^{n} U_{ij}^2\bigg| \lambda_j(M) - \frac{tr(M)}{n} \bigg|.\]
	\end{proposition}
	\begin{proof}
		The spectral decomposition of $M$ allow us to write 
		\[\bigg| M - \frac{tr(M)}{n}I \bigg| = U\bigg| \Lambda(M) - \frac{tr(M)}{n}I \bigg| U^T ,\]
		where $|\Lambda(M)- (tr(M)/n)I|_{jk} = |\lambda_j(M) - (tr(M)/n)|\delta_{jk} .$ Finally we have
		\[\bigg| M - \frac{tr(M)}{n}I \bigg|_{ii} =	\sum_{j=1}^{n}\sum_{k=1}^{n}U_{ij}|\Lambda(M)- (tr(M)/n)I|_{jk}U_{ik}=\sum_{j=1}^{n} U_{ij}^2\bigg| \lambda_j(M) - \frac{tr(M)}{n} \bigg|.	\qedhere\]
	\end{proof}
	
	\begin{remark}
		\label{remark:NonCommutativeDistribution}
		The weights involved in the sum in the previous result satisfy:
		\[\sum_{i=1}^{n}U_{ij}^2 = 1=\sum_{j=1}^{n}U_{ij}^2.\]
		In the context of non-commutative probability one would say that the distribution of the matrix $M$ with respect to the functional $\phi_i$ is the probability measure
		\[\mu(M) = \sum_{j=1}^{n}U_{ij}^2 \delta_{\lambda_j(M)}. \]
		For an introduction to non-commutative probability theory see \cite{NicaSpeicher06}.
	\end{remark}

	The following identity is known as the Coulson Integral Formula, it provide us with another method for computing the energy of a graph:
	\[\mathcal{E}(G) = \frac{1}{\pi}\int_{\mathbb{R}} \left[n-\frac{ix\phi'(A;ix)}{\phi(A;ix)}\right]dx,\]
	where $\phi(A;x)$ is the characteristic polynomial of $A$. It was first stated in \cite{Coulson40}, in the context of organic molecules, and in \cite{Gutman78} for the graph energy. Later, in  \cite{PenaRada16}, a variant of this formula was shown to be true for the (outer) energy associated to a symmetric matrix. The Coulson Integral Formula has the advantage of depending only on the characteristic polynomial of $A$, without the need of knowing its whole spectrum.
	
	A version of the Coulson Integral Formula for the energy of a vertex was stated in \cite{ArizmendiLunaRamirez19}. We show now that a similar formula holds for the energy $M\mathcal{E}(v_i).$ First we state the following lemma.
	
	\begin{lemma}
		\label{lemma:PsiIdentity}
		Let $M = U\Lambda(M) U^T$ be the spectral decomposition of $M$, then
		\[\sum_{j=1}^{n}\frac{U_{ij}^2}{z-\left(\lambda_j(M) - \frac{tr(M)}{n}\right)} 
		= \frac{det\left[\left(z-\frac{tr(M)}{n}\right)I -\widetilde{ M}_{ii}\right]}{det\left[\left(z-\frac{tr(M)}{n}\right)I - M\right]} ,\]
		where $\widetilde{ M}_{ii}$ is the matrix obtained from $M$ by eliminating its $i$-th column and its $i$-th row.
	\end{lemma}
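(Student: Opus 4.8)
The plan is to recognize both sides of the asserted identity as the $(i,i)$-entry of the resolvent of the centered matrix $\widehat{M} := M - \frac{tr(M)}{n}I$, computed in two different ways. On the spectral side, write $\widehat M = U\big(\Lambda(M) - \frac{tr(M)}{n}I\big)U^T$; since $U$ is orthonormal this gives $(zI - \widehat M)^{-1} = U\big(zI - \Lambda(M) + \frac{tr(M)}{n}I\big)^{-1}U^T$, and reading off the $(i,i)$ entry exactly as in the proof of Proposition \ref{prop:EnergySpectral} — now with the diagonal matrix $\big|\Lambda(M) - \frac{tr(M)}{n}I\big|$ replaced by the diagonal resolvent — yields
\[\big[(zI - \widehat M)^{-1}\big]_{ii} = \sum_{j=1}^{n} \frac{U_{ij}^2}{z - \big(\lambda_j(M) - \frac{tr(M)}{n}\big)},\]
which is the left-hand side of the lemma.

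For the right-hand side I would use the cofactor (adjugate) formula for the inverse of a matrix: for any invertible $B$, one has $[B^{-1}]_{ii} = det(\widetilde{B}_{ii})/det(B)$, where $\widetilde{B}_{ii}$ is $B$ with its $i$-th row and column deleted and the $(i,i)$ cofactor carries the sign $(-1)^{2i} = +1$. Applying this with $B = zI - \widehat M$, and noting that deleting the $i$-th row and column commutes with the centering, so that the deleted principal submatrix of $\widehat M$ is $\widetilde{M}_{ii} - \frac{tr(M)}{n}I$, turns $det(B)$ and $det(\widetilde{B}_{ii})$, after the shift by $\frac{tr(M)}{n}$ is absorbed into the identity matrices, into precisely the quotient of characteristic-polynomial-type determinants appearing on the right-hand side. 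Equating the two expressions for $[(zI - \widehat M)^{-1}]_{ii}$ establishes the identity.

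The argument is essentially mechanical; the only step needing care is the bookkeeping of the shift $\frac{tr(M)}{n}$ — consistently rewriting $zI - \widehat M$ in terms of $M$ and recognizing the deleted submatrix of $M - \frac{tr(M)}{n}I$ as $\widetilde{M}_{ii} - \frac{tr(M)}{n}I$. It is also worth noting that the asserted equality is between rational functions of $z$, so it suffices to prove it for $z$ outside the spectrum of $\widehat M$, where $zI - \widehat M$ is invertible, and then to invoke equality of rational functions; as a consistency check, both sides have simple poles only among the shifted eigenvalues $\lambda_j(M) - \frac{tr(M)}{n}$ and decay like $1/z$ as $z \to \infty$. (Alternatively, one could verify the identity by matching residues at these poles through a partial-fraction argument, but the resolvent/Cramer route above is cleaner and handles eigenvalue multiplicities automatically.)
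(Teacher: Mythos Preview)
Your proof is correct and follows essentially the same approach as the paper: both identify each side of the identity with the $(i,i)$-entry of the resolvent $(zI-\widehat M)^{-1}$, obtaining the left side via the spectral decomposition and the right side via the adjugate/cofactor formula. The only difference is that the paper passes through the Neumann series $\Psi_i(z)=\sum_{k\ge 0}[\widehat M^{\,k}]_{ii}\,z^{-k-1}$ as an intermediary, whereas you invert $zI-\widehat M$ directly and are more explicit about working off the spectrum and extending by equality of rational functions; this is a minor stylistic improvement rather than a different method.
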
  
	\begin{proof}
		We define $\Psi_i(z) = \sum_{k=0}^{\infty} [(M-(tr(M)/n)I)^k]_{ii} z^{-k-1}.$ On one hand, we arrive at
		\[\Psi_i(z) = \sum_{j=1}^{n} \frac{U_{ij}^2}{z-\left(\lambda_j(M) - \frac{tr(M)}{n}\right)}\]
		by noticing that $[(M-(tr(M)/n)I)^k]_{ii} = \sum_{j=1}^{n} U_{ij}^2\left(\lambda_j(M) - \frac{tr(M)}{n}\right)^k. $ On the other, we can also write
		\begin{align*}
			\Psi_i(z) &= \frac{1}{z} \left[\sum_{k=0}^\infty \left(\frac{M-(tr(M)/n)I}{z}\right)^k\right]_{ii}\\
			&= \left[\left(\left(z-\frac{tr(M)}{n}\right)I - M\right)^{-1} \right]_{ii} \\
			&=  \frac{det\left[\left(z-\frac{tr(M)}{n}\right)I -\widetilde{ M}_{ii}\right]}{det\left[\left(z-\frac{tr(M)}{n}\right)I - M\right]} ,
		\end{align*}
		where in the last equality we used the expression of the inverse of a matrix in terms of its adjugate. This proves the lemma.
	\end{proof}

	To prove the integral formula for the energy of a vertex, we follow a similar idea as in the proof of Theorem 1 in \cite{Gutman78} which avoids the use of contour integration.
	
	\begin{theorem}[Coulson Integral Formula for a Vertex]
		\label{thm:CoulsonIntegralFormula}
		Let $G$ be a graph and $v_i$ a vertex in it. Then
		\[M\mathcal{E}_G(v_i) = \frac{1}{\pi} \int_{\mathbb{R}} \left(1 - ix \frac{det\left[\left(ix-\frac{tr(M)}{n}\right)I -\widetilde{ M}_{ii}\right]}{det\left[\left(ix-\frac{tr(M)}{n}\right)I - M\right]} \right) dx\]
	\end{theorem}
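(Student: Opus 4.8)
The plan is to turn the right-hand side into the spectral sum of Proposition \ref{prop:EnergySpectral} and then evaluate it term by term, reducing everything to the elementary integral $\int_{\mathbb{R}}(x^{2}+c^{2})^{-1}\,dx=\pi/|c|$, exactly in the spirit of Gutman's contour-free argument.

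First I would rewrite the integrand using Lemma \ref{lemma:PsiIdentity} with $z=ix$, which identifies the ratio of determinants with $\sum_{j=1}^{n}U_{ij}^{2}/(ix-(\lambda_j(M)-tr(M)/n))$. Writing $a_j:=\lambda_j(M)-tr(M)/n$ and using $\sum_{j}U_{ij}^{2}=1$ from Remark \ref{remark:NonCommutativeDistribution}, the integrand becomes
\[
1-ix\sum_{j=1}^{n}\frac{U_{ij}^{2}}{ix-a_j}
=\sum_{j=1}^{n}U_{ij}^{2}\left(1-\frac{ix}{ix-a_j}\right)
=\sum_{j=1}^{n}U_{ij}^{2}\,\frac{-a_j}{ix-a_j}
=\sum_{j=1}^{n}U_{ij}^{2}\,\frac{a_j^{2}+i a_j x}{x^{2}+a_j^{2}},
\]
where the last equality multiplies numerator and denominator by $-ix-a_j$. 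Since the sum is finite, I would interchange it with the integral. For each $j$, the imaginary part $a_j x/(x^{2}+a_j^{2})$ is odd in $x$, so $\int_{-T}^{T}$ of it vanishes for every $T$; the real part gives $\int_{\mathbb{R}}a_j^{2}/(x^{2}+a_j^{2})\,dx=a_j^{2}\cdot(\pi/|a_j|)=\pi|a_j|$ when $a_j\neq 0$, while the whole $j$-th term is identically zero when $a_j=0$, matching $|a_j|=0$. Summing and dividing by $\pi$ yields $\sum_{j=1}^{n}U_{ij}^{2}|a_j|$, which is $M\mathcal{E}_G(v_i)$ by Proposition \ref{prop:EnergySpectral}.

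The only real obstacle is analytic rather than algebraic: the integrand is not in $L^{1}(\mathbb{R})$, since its imaginary part decays only like $1/x$, so the integral must be understood as the symmetric limit $\lim_{T\to\infty}\int_{-T}^{T}$. Under this convention the odd imaginary parts drop out cleanly term by term and the finite sum–integral interchange is unproblematic, so the computation goes through. I would also remark that at the isolated point $x=0$ the determinant ratio may fail to be defined (precisely when $tr(M)/n$ is an eigenvalue of $M$), but removing a single point does not affect the integral, so this causes no difficulty.
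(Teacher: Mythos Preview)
Your proof is correct and follows essentially the same route as the paper's: both use Lemma~\ref{lemma:PsiIdentity} and Proposition~\ref{prop:EnergySpectral} together with the elementary identity $1-\tfrac{ix}{ix-t}=\tfrac{t^{2}+itx}{t^{2}+x^{2}}$ and $\int_{\mathbb{R}}\tfrac{t^{2}}{t^{2}+x^{2}}\,dx=\pi|t|$; the only difference is direction---the paper starts from $|t|$ and builds up to the determinant ratio, while you start from the ratio and reduce to the spectral sum. Your remarks on the principal-value interpretation and the removable issue at $x=0$ add welcome rigor that the paper leaves implicit.
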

	\begin{proof}
		Given that $\int xt(t^2+x^2)^{-1}dx =0,$ we can write
		\begin{align*}
		|t| &= \frac{1}{\pi}\int_{\mathbb{R}} \frac{t^2}{t^2+x^2}dx + i \frac{1}{\pi}\int_{\mathbb{R}} \frac{xt}{t^2 + x^2}dx = \frac{1}{\pi} \int_{\mathbb{R}} \left(1 - \frac{ix}{ix-t}\right) dx.
		\end{align*}
		Making the substitution $t=\lambda_j(M)- tr(M)n^{-1},$ multiplying by $U_{ij}^2$ and taking the sum over $j$, we obtain by Proposition \ref{prop:EnergySpectral}
		\[M\mathcal{E}_G(v_i)= \sum_{j=1}^{n} U_{ij}^2 \bigg|\lambda_i(M) - \frac{tr(M)}{n}\bigg| = \frac{1}{\pi} \int_{\mathbb{R}} \left(1 - ix \sum_{j=1}^{n}\frac{U_{ij}^2}{ix-\left(\lambda_i(M) - \frac{tr(M)}{n}\right)}\right) dx.\]
		Finally Lemma \ref{lemma:PsiIdentity} allow us to replace the sum in the integrand which conclude the proof.
	\end{proof}
	
	We finish this section by stating a lemma that would later relate the Laplacian and normalized Laplacian energies with the Randi\'c index. The proof is a clever usage of the Cauchy-Schwarz inequality as it is done in \cite{ArizmendiArizmendi21}.
	
	\begin{lemma}
		\label{lemma:CS-AGMean}
		Let $G$ be a connected graph and $v_i, v_j$ adjacent vertices. Then 
		\begin{enumerate}
			\item[(i)] $M\mathcal{E}_G(v_i) M\mathcal{E}_G(v_j)\geq [M- (tr(M)/n)I]_{ij}^2,$
			\item[(ii)] $M\mathcal{E}_G(v_i) +M\mathcal{E}_G(v_j) \geq 2\bigg|\left[M-\frac{tr(M)}{n}I\right]_{ij}\bigg|.$
		\end{enumerate}
	\end{lemma}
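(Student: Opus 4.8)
The plan is to deduce both inequalities from a single application of the Cauchy--Schwarz inequality followed by the arithmetic--geometric mean inequality. Write $B := M - \tfrac{tr(M)}{n}I$; this is again a real symmetric matrix, and by the very definition of the $M$-energy of a vertex we have $M\mathcal{E}_G(v_k) = |B|_{kk}$ for every $k$. Since $|B|$ is positive semidefinite it has a symmetric positive semidefinite square root $|B|^{1/2}$, so, writing $e_1,\dots,e_n$ for the standard basis and $\|\cdot\|$ for the Euclidean norm, one has the identity
\[
|B|_{kk} = \bigl(|B|^{1/2}|B|^{1/2}\bigr)_{kk} = \bigl\| |B|^{1/2} e_k \bigr\|^2 .
\]

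The algebraic input that makes the Cauchy--Schwarz step go through is the polar-type factorization $B = |B|^{1/2}\, S\, |B|^{1/2}$, where $S = \operatorname{sgn}(B)$ is obtained from a spectral decomposition $B = U\Lambda U^{T}$ by replacing each eigenvalue by its sign (and a vanishing eigenvalue by $0$, which is consistent since $0\cdot 0\cdot 0 = 0$). The matrix $S$ is symmetric with eigenvalues in $\{-1,0,1\}$, hence has operator norm $\|S\|\le 1$. Using this factorization, for any two indices $i,j$,
\[
B_{ij} = e_i^{T}\, |B|^{1/2} S |B|^{1/2}\, e_j = \bigl\langle |B|^{1/2} e_i,\; S\, |B|^{1/2} e_j \bigr\rangle ,
\]
so Cauchy--Schwarz together with $\|S\|\le 1$ yields
\[
\bigl| B_{ij} \bigr| \;\le\; \bigl\| |B|^{1/2} e_i \bigr\|\,\bigl\| S\,|B|^{1/2} e_j \bigr\| \;\le\; \bigl\| |B|^{1/2} e_i \bigr\|\,\bigl\| |B|^{1/2} e_j \bigr\| \;=\; |B|_{ii}^{1/2}\, |B|_{jj}^{1/2} .
\]
Squaring gives exactly statement (i). Statement (ii) then follows by combining (i) with the arithmetic--geometric mean inequality:
\[
M\mathcal{E}_G(v_i) + M\mathcal{E}_G(v_j) = |B|_{ii} + |B|_{jj} \;\ge\; 2\,|B|_{ii}^{1/2} |B|_{jj}^{1/2} \;\ge\; 2\,\bigl| B_{ij} \bigr| .
\]

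The substantive step — the point alluded to in the text — is the factorization $B = |B|^{1/2} S |B|^{1/2}$ with $\|S\|\le 1$; everything else is routine. I expect the only care needed in writing it up to be the bookkeeping around zero eigenvalues in the definition of $\operatorname{sgn}(B)$, and, if one wishes to phrase the middle step in the language of the positive functionals $\phi_i$ rather than of the Euclidean inner product, verifying that $\phi_i\bigl(|B|^{1/2}(|B|^{1/2})^{*}\bigr) = |B|_{ii}$; note that the adjacency hypothesis on $v_i, v_j$ plays no role in the argument and is included only because that is the case used later (it is what connects $B_{ij}$ to the Randi\'c index).
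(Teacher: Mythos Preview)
Your proof is correct and is essentially the same argument as the paper's: the paper carries out the Cauchy--Schwarz step in eigenvector coordinates by defining $w_i=(U_{ik}|\lambda_k - tr(M)/n|^{1/2})_k$ and $w_j=(U_{jk}\sigma_k|\lambda_k - tr(M)/n|^{1/2})_k$ with $\sigma_k=\operatorname{sign}(\lambda_k - tr(M)/n)$, and your vectors $|B|^{1/2}e_i$ and $S\,|B|^{1/2}e_j$ are exactly these up to the orthogonal change of basis $U$, so the two computations coincide. Your remark that the adjacency hypothesis plays no role in the proof is also accurate.
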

	\begin{proof}
		As before, we use the spectral decomposition of $M$ to write $M=U\Lambda(M)U^T$. In order to prove the first inequality, we define the vectors:
		\begin{align*}
			w_i &= (U_{i1} \sqrt{\lambda_1(M)-(tr(M)/n)},\cdots,U_{in} \sqrt{\lambda_n(M)-(tr(M)/n)}),\\
			w_j &= (U_{j1} \sigma_1\sqrt{\lambda_1(M)-(tr(M)/n)},\cdots,U_{jn}\sigma_n \sqrt{\lambda_n(M)-(tr(M)/n)}),
		\end{align*}
		where $\sigma_k=sign(\lambda_k(M)-(tr(M)/n)).$ By Proposition \ref{prop:EnergySpectral}, we have
		\[\|w_i\|^2 = M\mathcal{E}(v_i),\; \|w_j\|^2 = M\mathcal{E}(v_j).\]
		Also, we can compute the inner product of $w_i$ and $w_j$:
		\[\langle w_i, w_j \rangle = \sum_{k=1}^n U_{ik}U_{jk}\left(\lambda_k(M)-\frac{tr(M)}{n}\right)=\left[M-\frac{tr(M)}{n}I\right]_{ij},\]
		and we obtain \textit{(i)} using the Cauchy-Schwarz inequality.
		
		The second part of the lemma is just an application of the inequality of arithmetic and geometric means and the previous inequality:
		\[M\mathcal{E}(v_i) +M\mathcal{E}(v_j)\geq 2 \sqrt{M\mathcal{E}(v_i)M\mathcal{E}(v_j)} \geq 2\bigg|\left[M-\frac{tr(M)}{n}I\right]_{ij}\bigg|.\qedhere\] 	
	\end{proof}

\section{Laplacian Energy of a Vertex}
\label{section:LE}
In this section we study the energy of vertices obtained by using the Laplacian Matrix $L$. Given a graph $G=(V,E),$ we define the \textit{Laplacian energy} of a vertex $v_i\in V$ by

\[L\mathcal{E}_G(v_i) := \phi_i \left(\bigg|L-\frac{2m}{n}Id\bigg|\right),
\]
where we used the fact that $tr(L)=\sum_{i=1}^n d_i = 2m.$

The \textit{Laplacian energy} of a Graph was defined in \cite{GutmanZhou06} as $L\mathcal{E}(G) = tr \left(\bigg|L-\frac{2m}{n}Id\bigg|\right)$, its properties had been investigated also in \cite{DasMojallal2014}, and in \cite{PirzadaGanie15} among others. It is immediate that 
\begin{equation}
\label{eq:sumOfLEnergies}
L\mathcal{E}(G) = \sum_{i=1}^{n}L\mathcal{E}_G(v_i).
\end{equation}

In \cite{GutmanZhou06}, it was also proved that if $G$ is a $d$-regular graph, the Laplacian energy and the energy would coincide. We have an analogous result for the energies of vertices.

\begin{lemma}
	\label{lemma:regularGraphs}
	If $G$ is a regular graph with degree $d$, then $L\mathcal{E}_G(v_i)= \mathcal{E}_G(v_i)$ for every vertex. 
\end{lemma}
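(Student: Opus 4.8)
The plan is to compute both quantities explicitly and observe they are built from the same data. For a $d$-regular graph, $D = dI$, so $L = dI - A$, and $tr(L) = nd = 2m$, hence $2m/n = d$. Therefore $L - \tfrac{2m}{n}I = (dI - A) - dI = -A$, and so $|L - \tfrac{2m}{n}I| = |-A| = |A|$ (using $|B| = (BB^*)^{1/2}$, which is unchanged under $B \mapsto -B$ since $(-B)(-B)^* = BB^*$).

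Having identified the two matrices, I would then apply the functional $\phi_i$ to both sides. By definition $L\mathcal{E}_G(v_i) = \phi_i(|L - \tfrac{2m}{n}I|) = \phi_i(|A|) = \mathcal{E}_G(v_i)$, which is exactly the claim. Alternatively, to match the style of the earlier results one could invoke Proposition \ref{prop:EnergySpectral}: the eigenvalues of $L$ are $d - \mu_k$ where $\mu_k$ ranges over the eigenvalues of $A$, with the \emph{same} eigenvectors $U$, and $d - \mu_k - \tfrac{2m}{n} = -\mu_k$, so $|\lambda_k(L) - \tfrac{2m}{n}| = |\mu_k|$ term by term; summing $U_{ik}^2|\mu_k|$ recovers $\mathcal{E}_G(v_i)$. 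Either route is a couple of lines.

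There is essentially no obstacle here; the only point requiring a word of care is the reduction $tr(L)/n = d$, which uses $\sum_i d_i = nd$ in the regular case, together with the trivial observation that the matrix absolute value ignores an overall sign. Both are immediate, so I expect the proof to be short and purely computational, serving mainly to record the vertex-level analogue of the known graph-level identity from \cite{GutmanZhou06}.
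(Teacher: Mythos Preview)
Your argument is correct and essentially identical to the paper's own proof: reduce $2m/n$ to $d$, observe $L-\tfrac{2m}{n}I=-A$ so $|L-\tfrac{2m}{n}I|=|A|$, and apply $\phi_i$. The alternative spectral route you sketch is also fine but unnecessary here.
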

\begin{proof}
	If $G$ is $d$-regular, then $2m/n=d$ and the matrices $(2m/n)Id$ and $D$ are equal. So $|L-(2m/n)Id| = |D-A-(2m/n)Id| =|-A|=|A|.$ This way:
	\[L\mathcal{E}_G(v_i)=\phi_i(|L-(2m/n)Id|) = \phi_i(|A|)= \mathcal{E}_G(v_i).\qedhere\]
\end{proof}

We now proceed to give an upper and a lower bound on the Laplacian energy of a vertex. The former is analogous to the one given in Theorem 2 of \cite{GutmanZhou06}, which in turn is analogous to the McClelland inequality \cite{McClelland71}.

\begin{theorem}
	\label{thm:McClellandLaplacian}
	For a graph $G$ and a vertex $v_i \in G$ with degree $d_i$, we have
	\[L\mathcal{E}_G(v_i) \leq \sqrt{d_i + \left(\frac{2m}{n} - d_i\right)^2}.\]
	The equality holds if and only if $n\leq2.$
\end{theorem}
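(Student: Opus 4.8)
The natural route is to combine Proposition~\ref{prop:EnergySpectral} with the Cauchy--Schwarz inequality, in the spirit of McClelland's bound. Writing $L = U\Lambda(L)U^T$ for the spectral decomposition, Proposition~\ref{prop:EnergySpectral} gives
\[
L\mathcal{E}_G(v_i) = \sum_{j=1}^{n} U_{ij}^2\,\Big|\lambda_j(L) - \tfrac{2m}{n}\Big|
= \sum_{j=1}^n |U_{ij}|\cdot\Big(|U_{ij}|\,\big|\lambda_j(L)-\tfrac{2m}{n}\big|\Big).
\]
Applying Cauchy--Schwarz to the vectors $(|U_{ij}|)_{j}$ and $(|U_{ij}|\,|\lambda_j(L)-\tfrac{2m}{n}|)_{j}$ and using $\sum_j U_{ij}^2 = 1$ from Remark~\ref{remark:NonCommutativeDistribution}, one obtains
\[
\big(L\mathcal{E}_G(v_i)\big)^2 \le \sum_{j=1}^n U_{ij}^2\Big(\lambda_j(L)-\tfrac{2m}{n}\Big)^2 = \Big[\big(L - \tfrac{2m}{n}I\big)^2\Big]_{ii},
\]
the last equality coming from $U(\Lambda(L)-\tfrac{2m}{n}I)^2U^T = (L-\tfrac{2m}{n}I)^2$.

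The next step is to evaluate that diagonal entry. Expanding,
\[
\Big[\big(L-\tfrac{2m}{n}I\big)^2\Big]_{ii} = [L^2]_{ii} - \tfrac{4m}{n}[L]_{ii} + \tfrac{4m^2}{n^2}.
\]
Here $[L]_{ii}=d_i$, and since the $i$-th row of $L$ carries $d_i$ on the diagonal and $-1$ in each of the $d_i$ neighbour positions, $[L^2]_{ii} = \sum_{k} L_{ik}^2 = d_i^2 + d_i$. Substituting yields $[(L-\tfrac{2m}{n}I)^2]_{ii} = d_i^2 + d_i - \tfrac{4m}{n}d_i + \tfrac{4m^2}{n^2} = d_i + (\tfrac{2m}{n}-d_i)^2$, and taking square roots gives the stated bound.

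For the equality clause, the plan is to invoke the equality case of Cauchy--Schwarz: equality holds precisely when $(|U_{ij}|)_j$ and $(|U_{ij}|\,|\lambda_j(L)-\tfrac{2m}{n}|)_j$ are proportional, i.e. when $|\lambda_j(L)-\tfrac{2m}{n}|$ is constant over all $j$ with $U_{ij}\neq 0$. When $n\le 2$ (so $G$ is a single vertex or $K_2$) this is automatic, and a direct computation confirms equality, giving the ``if'' direction. For the converse one uses that $G$ is connected, so $0$ is a simple eigenvalue of $L$ with eigenvector $\tfrac{1}{\sqrt{n}}\mathbf{1}$; hence $U_{i1}\neq0$ for every vertex, the common value of $|\lambda_j(L)-\tfrac{2m}{n}|$ on the support must be $\tfrac{2m}{n}$, and every Laplacian eigenvalue occurring with $U_{ij}\neq0$ lies in $\{0,\tfrac{4m}{n}\}$. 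Feeding this into the moment identities $\sum_j U_{ij}^2=1$, $\sum_j U_{ij}^2\lambda_j(L)=d_i$ and $\sum_j U_{ij}^2\lambda_j(L)^2=d_i^2+d_i$ then pins down $d_i$, $m$ and $n$.

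I expect this last step to be the main obstacle. The inequality itself is routine once the identity $[L^2]_{ii}=d_i^2+d_i$ is noted, but the equality analysis requires a genuine case distinction: the support condition ``all relevant eigenvalues lie in $\{0,\tfrac{4m}{n}\}$'' only forces relations such as $d_i=\tfrac{4m}{n}-1$ and $4m=n^2$, and turning these into the claimed characterization is the delicate part of the argument.
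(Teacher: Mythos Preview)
Your proof of the inequality is essentially identical to the paper's: same Cauchy--Schwarz with the vectors $(|U_{ij}|)_j$ and $(|U_{ij}|\,|\lambda_j(L)-\tfrac{2m}{n}|)_j$, followed by the computation of $[(L-\tfrac{2m}{n}I)^2]_{ii}=d_i+(\tfrac{2m}{n}-d_i)^2$. No issues there.

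The gap is exactly where you flag it: your equality analysis stops short. From the first two moments (together with $U_{i1}^2=1/n$) you correctly extract $d_i=\tfrac{4m}{n}-1$ and $4m=n^2$, hence $d_i=n-1$; but, as you note, these two relations alone do not force $n\le 2$. The paper supplies the missing ingredient: it also computes the \emph{third} moment
\[
\phi_i(L^3)=d_i^3+2d_i^2+\sum_{k\sim i}d_k-\Delta_i,
\]
where $\Delta_i$ is the number of triangles through $v_i$, and equates it with $(4m/n)^3\sum_{k\ge 2}U_{ik}^2$. Combined with the earlier relations this yields $\Delta_i=(n-2)^2$. Since $d_i=n-1$ makes $v_i$ adjacent to every other vertex, each remaining edge contributes exactly one triangle through $v_i$, so $m=d_i+\Delta_i$. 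Substituting $m=n^2/4$, $d_i=n-1$, $\Delta_i=(n-2)^2$ gives $(n-2)^2=0$, i.e. $n=2$. That third-moment/triangle step is the piece your outline is missing.
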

\begin{proof}
	Let $L=U\Lambda U^T$ be the spectral decomposition of $L$; we consider the vectors:
	\[	a = (|U_{i1}|, |U_{i2}|, \cdots, |U_{in}|),\]
	\[b = (|U_{i1}||\lambda_1(L) - 2m/n|, |U_{i2}||\lambda_2(L) - 2m/n|, \cdots, |U_{in}||\lambda_n(L) - 2m/n|).\]
	If we use the Cauchy-Schwarz inequality with these vectors we get:
	\[
	LE(v_i) = \sum_{j=1}^{n} U_{ij}^2 |\lambda_j(L) - 2m/n| = a\cdot b \leq \sqrt{\sum_{j=1}^n |U_{ij}|^2 \sum_{j=1}^n |U_{ij}|^2 |\lambda_j(L) - 2m/n|^2},
	\]
	where the first sum inside the sqare root is equal to $1$, and the second sum is equal to $[(L- (2m/n)I)^2]_{ii},$ which can be computed as follows:
	\begin{align*}
		\left[\left(L-\left(\frac{2m}{n}\right)Id\right)^2\right]_{ii} &= \left[\left(D-A-\left(\frac{2m}{n}\right)Id\right)^2\right]_{ii}  \\
		&=\left[ \left(\frac{2m}{n}\right)^2Id -2\left(\frac{2m}{n}\right)D+D^2+A^2 +2\left(\frac{2m}{n}\right)A-(DA+AD)\right]_{ii}\\
		&= \left(\frac{2m}{n}\right)^2 -2 \left(\frac{2m}{n}\right) d_i + d_i^2 + d_i
		= \left(\frac{2m}{n} - d_i\right)^2 +d_i, 
	\end{align*}
	which shows the inequality. 
	
	The equality holds if and only if $a$ and $b$ are parallel, which can only be true if one have
	\[|\lambda_j(L) - 2m/n| = |\lambda_k(L) - 2m/n|\]
	for every $j$ and $k$ such that $U_{ij} \neq 0$ and $U_{ik}\neq 0$.	Since $G$ is connected $\lambda_1(L)=0,$ furthermore we can take as an the first column of $U$ the egienvector $(1/\sqrt{n},1/\sqrt{n},\cdots, 1/\sqrt{n})$ associated to that eigenvalue, so $U_{i1} \neq 0$ and we arrive at
	\[|\lambda_k(L) - 2m/n| = |\lambda_1(L) - 2m/n|= \frac{2m}{n}, \;  \text{ thus }\lambda_k(L) = \frac{4m}{n}\]
	for all $n\geq k > 1$ such that $U_{ik} \neq 0.$ 
	Those eigenvalues allow us to write the first three moments of $L$ with respect to $\varphi_i$:
	
	\begin{align*}	
	d_i = \varphi_i(L) =& \sum_{k=2}^nU_{ik}^2 \left(\frac{4m}{n}\right), \\
	d_i^2 + d_i = \varphi_i(L^2) =& \sum_{k=2}^nU_{ik}^2 \left(\frac{4m}{n}\right)^2\\
	d_i^3 +2d_i^2 + \sum_{k\sim i}d_k - \Delta_i = \varphi_i(L^3) =& \sum_{k=2}^nU_{ik}^2 \left(\frac{4m}{n}\right)^3
	\end{align*}
	
	where $\Delta_i$ is the number of triangles in which $v_i$ lies. Furthermore, from the orthogonality of $U$ we can also state that
	\begin{equation}
		\label{eq:orthogonalityOfU}
		\sum_{k=2}^n U_{ik}^2 + 1/n = 1, \text{ for every } i\in \{1,\cdots, n\}.
	\end{equation} 
	All of the above equations imply the following equalities:
	\[d_i = n-1,\; m = n^2 / 4, \; \Delta_i = (n-2)^2,\]
	in particular, the first one tells us that $v_i$ is connected to all the other vertices in the graph, thus for every other edge there would be a triangle containing $v_i$, meaning that
	\[m = d_i + \Delta_i,\]
	allowing us to conclude that $n=2$, and since the graph is connected, we would have $G= K_2.$
\end{proof}
	\begin{remark}
		\label{remark:improvement1}
		The bound obtained in the previous theorem gives an improvement over inequality 10 of Theorem 2 in \cite{GutmanZhou06}, that is to say
		\[ L\mathcal{E}(G) = \sum_{i=1}^nL\mathcal{E}_G(v_i) \leq \sum_{i=0}^n \sqrt{d_i + \left(d_i-´\frac{2m}{n}\right)^2} \leq \sqrt{n\sum_{i=1}^n d_i+\left(d_i-´\frac{2m}{n}\right)^2} .\]
	\end{remark}
	
	Now we give a lower bound in terms of the degrees of the graph, similar to Therorem 3.3 in \cite{ArizmendiEtAl18}. We remind that $\lambda_n(L)$ denote the largest eigenvalue of $L$. It was proved in \cite{Kelmans67} that if $G$ is a simple graph, then $\lambda_n(L) \leq n,$ with equality if and only if the complement of $G$ is not connected.
	
	\begin{theorem}
		\label{thm:LowerLaplacian}
		Let $G$ be a simple graph, $v_i \in V$ of degree $d_i,$ and $n' = \max\{\frac{2m}{n}, n-\frac{2m}{n}\}$, then 
		\[L\mathcal{E}_G(v_i) \geq \frac{\left(\frac{2m}{n} - d_i\right)^2 + d_i}{n'}.\]
		Equality hods {if and only if the spectrum of $G$ takes the values $\{0, 2m/n, n\}.$}
	\end{theorem}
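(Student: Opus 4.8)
The plan is to compare the second moment of $L$ with respect to $\phi_i$ against its first absolute moment. By Proposition~\ref{prop:EnergySpectral} we have $L\mathcal{E}_G(v_i)=\sum_{j=1}^n U_{ij}^2\,\big|\lambda_j(L)-\tfrac{2m}{n}\big|$, while the computation already performed in the proof of Theorem~\ref{thm:McClellandLaplacian} gives
\[\sum_{j=1}^n U_{ij}^2\Big(\lambda_j(L)-\tfrac{2m}{n}\Big)^2=\Big[\big(L-\tfrac{2m}{n}Id\big)^2\Big]_{ii}=\Big(\tfrac{2m}{n}-d_i\Big)^2+d_i .\]
So the claim reduces to the term-by-term comparison $t^2\le n'|t|$ with $t=\lambda_j(L)-\tfrac{2m}{n}$, summed against the nonnegative weights $U_{ij}^2$.

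To justify that comparison I would pin down the range of the $\lambda_j(L)$. Since $L$ is positive semidefinite, $\lambda_j(L)\ge 0$, and by the Kelmans bound quoted just before the statement, $\lambda_j(L)\le n$; as $0\le \tfrac{2m}{n}\le n$ as well, the quantity $\lambda_j(L)-\tfrac{2m}{n}$ lies in $\big[-\tfrac{2m}{n},\,n-\tfrac{2m}{n}\big]$, hence $\big|\lambda_j(L)-\tfrac{2m}{n}\big|\le \max\{\tfrac{2m}{n},\,n-\tfrac{2m}{n}\}=n'$ for every $j$. Multiplying this by $\big|\lambda_j(L)-\tfrac{2m}{n}\big|\ge 0$, then by $U_{ij}^2\ge 0$, and summing over $j$ yields
\[\Big(\tfrac{2m}{n}-d_i\Big)^2+d_i=\sum_{j=1}^n U_{ij}^2\Big(\lambda_j(L)-\tfrac{2m}{n}\Big)^2\le n'\sum_{j=1}^n U_{ij}^2\Big|\lambda_j(L)-\tfrac{2m}{n}\Big|=n'\,L\mathcal{E}_G(v_i),\]
and dividing by $n'>0$ gives the bound.

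For the equality discussion I would track where the estimates can be tight. Equality forces $\big(\lambda_j(L)-\tfrac{2m}{n}\big)^2=n'\big|\lambda_j(L)-\tfrac{2m}{n}\big|$ for every $j$ with $U_{ij}\ne 0$, i.e. $\lambda_j(L)=\tfrac{2m}{n}$ or $\big|\lambda_j(L)-\tfrac{2m}{n}\big|=n'$. As in the proof of Theorem~\ref{thm:McClellandLaplacian}, connectedness lets us take the first column of $U$ to be $\tfrac{1}{\sqrt n}(1,\dots,1)$, the eigenvector for $\lambda_1(L)=0$, so $U_{i1}\ne 0$; since $G$ has an edge we have $\tfrac{2m}{n}\ne 0$, and therefore $\tfrac{2m}{n}=\big|0-\tfrac{2m}{n}\big|=n'$, which forces $\tfrac{2m}{n}\ge n-\tfrac{2m}{n}$. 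The only points of $[0,n]$ at distance $n'=\tfrac{2m}{n}$ from $\tfrac{2m}{n}$ are $0$ and $\tfrac{4m}{n}$; since a Laplacian eigenvalue cannot exceed $n$, the value $\tfrac{4m}{n}$ can occur only when $m=n^2/4$, and one concludes that every eigenvalue of $L$ appearing with $U_{ij}\ne 0$ — equivalently, every atom of the measure $\mu(L)$ of Remark~\ref{remark:NonCommutativeDistribution} — lies in $\{0,\tfrac{2m}{n},n\}$, with the case $m=n^2/4$ being the one in which all three values are available. Conversely, a spectrum of this shape collapses both inequalities to equalities.

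I expect the only real difficulty to be this equality analysis — identifying $n'$ with $\tfrac{2m}{n}$ through the zero eigenvalue and then controlling precisely which eigenvalues can survive and how the constraint $m=n^2/4$ enters — whereas the inequality itself is essentially immediate from Proposition~\ref{prop:EnergySpectral}, the second-moment identity recalled above, and the elementary estimate $\big|\lambda_j(L)-\tfrac{2m}{n}\big|\le n'$.
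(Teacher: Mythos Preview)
Your argument for the inequality is exactly the paper's: bound $|\lambda_j(L)-\tfrac{2m}{n}|\le n'$ using $0\le\lambda_j(L)\le n$, deduce $(\lambda_j-\tfrac{2m}{n})^2\le n'\,|\lambda_j-\tfrac{2m}{n}|$, and sum against the weights $U_{ij}^2$ to compare with $\phi_i\big((L-\tfrac{2m}{n}I)^2\big)=(\tfrac{2m}{n}-d_i)^2+d_i$. Your equality discussion is in fact more careful than the paper's --- you correctly note that the constraint applies only to indices with $U_{ij}\ne0$, you allow the case $\lambda_j=\tfrac{2m}{n}$ (equality in $t^2\le n'|t|$ at $t=0$), and you use the zero eigenvalue with $U_{i1}\ne0$ to pin down $n'=\tfrac{2m}{n}$, from which the three-value set drops out; the paper simply asserts the conclusion. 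The extra observation about $m=n^2/4$ is consistent with (and partially anticipates) the Remark that follows the theorem.
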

	\begin{proof}
		Notice that $|x|\geq x^2$ for $x\in [-1,1].$ Since $\lambda_n(L) \leq n,$ then $|\lambda_i(L) - \frac{2m}{n}|\leq n'$ for all $i.$  We conclude by
		\[L\mathcal{E}_G(v_i) = \sum_{j=1}^{n} U_{ij}^2 \Big|\lambda_j(L) - \frac{2m}{n}\Big| \geq \sum_{j=1}^{n} U_{ij}^2 \frac{(\lambda_j(L) - \frac{2m}{n})^2}{n'} = \frac{\phi_i\left([L-\frac{2m}{n}I]^2\right)}{n'}
		= \frac{\left(\frac{2m}{n} - d_i\right)^2 + d_i}{n'}.
		\]
		Equality holds if and only if $|\lambda_i(L) - \frac{2m}{n}|= n'$ for every $i$, which implies that the spectrum of $L$ is contained in $\{0, 2m/n, n\}.$				
	\end{proof}
	\begin{remark} We can say more if the last inequality is attained. First note that for a complete graph does not hold the equality, because $L\mathcal{E}(v) = \mathcal{E}(v) = 2(n-1)n^{-1} > 1$, for every vertex $v$. This implies that for if a graph attains the equality, then $\lambda_2(L)$ must be equal to $2m/n.$ If $\lambda_{n}(L) = 2m/n,$ then all the nontrivial eigenvalues have to be equal to this value, which would be also equal to $(n-1)n^{-1}d_{min}$ and to $(n-1)n^{-1}d_{max}$ this imply that the graph must be regular; in this case we would have
		\[d = \frac{2m}{n} = \frac{n-1}{n}d,\]
		which leads to a contradiction. We conclude $\lambda_{n}(L)=n.$ That is, the spectrum of $G$ must take the three values $\{0, 2m/n, n\}.$
	\end{remark}
	
	We finish this section, showing that lemma \ref{lemma:CS-AGMean} has the following result as corollary. For the first statement, one only need to notice that if $v_i$ and $v_j$ are adjacent vertices, then $\left[L-I\right]_{ij} =-1.$
	
	\begin{lemma}
		\label{lemma:CS-AGMeanL}
		Let $G$ be a connected graph and $v_i, v_j$ adjacent vertices. Then 
		\begin{itemize}
			\item $L\mathcal{E}_G(v_i) L\mathcal{E}_G(v_j)\geq 1,$
			\item $L\mathcal{E}_G(v_i) +L\mathcal{E}_G(v_j) \geq 2.$
		\end{itemize}
	\end{lemma}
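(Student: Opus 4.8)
The plan is to obtain both inequalities directly from Lemma~\ref{lemma:CS-AGMean}, specialized to the matrix $M = L$. The only nontrivial point is to identify the relevant off-diagonal entry of the shifted matrix. Since $tr(L) = \sum_{k=1}^{n} d_k = 2m$, the matrix that appears in Lemma~\ref{lemma:CS-AGMean} is exactly $L - \frac{2m}{n}I$, which is the one used to define $L\mathcal{E}_G(v_i)$. Because $i \neq j$, the scalar shift $\frac{2m}{n}I$ contributes nothing off the diagonal, so $\left[L - \frac{2m}{n}I\right]_{ij} = L_{ij} = (D - A)_{ij} = -A_{ij}$; equivalently, as noted before the statement, $[L - I]_{ij}$ is the same number. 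For adjacent vertices $v_i \sim v_j$ this entry equals $-1$.

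With this computation in hand the two claims are immediate substitutions. Plugging $\left[L - \frac{2m}{n}I\right]_{ij} = -1$ into part~(i) of Lemma~\ref{lemma:CS-AGMean} yields $L\mathcal{E}_G(v_i)\, L\mathcal{E}_G(v_j) \geq (-1)^2 = 1$, and plugging it into part~(ii) yields $L\mathcal{E}_G(v_i) + L\mathcal{E}_G(v_j) \geq 2\,|-1| = 2$. The hypotheses needed for Lemma~\ref{lemma:CS-AGMean} — namely that $G$ is connected and $v_i, v_j$ are adjacent — are precisely the hypotheses assumed here, so nothing further is required.

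There is no genuine obstacle in this argument; the entire content is the observation that the off-diagonal entries of $L$ (and of any constant shift of it) at an edge are $-1$, after which Lemma~\ref{lemma:CS-AGMean} does all the work. It is worth remarking only that the same reasoning applies verbatim to any symmetric matrix $M$ associated to $G$ whose off-diagonal entries on edges are $\pm 1$, which is why this corollary is the Laplacian analogue of the Randi\'c-type bounds recovered from the adjacency vertex energy in \cite{ArizmendiArizmendi21}.
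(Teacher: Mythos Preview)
Your proof is correct and follows exactly the approach the paper takes: the paper states this lemma as a corollary of Lemma~\ref{lemma:CS-AGMean}, noting only that for adjacent $v_i,v_j$ one has $[L-I]_{ij}=-1$ (equivalently, $[L-\tfrac{2m}{n}I]_{ij}=-1$), which is precisely the computation you carry out before invoking parts (i) and (ii).
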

	
	The second result relates the Laplacian energy of a graph with a version of the Randi\'c index defined as \[R_{-1/2}(G) := \sum_{\{v_i,v_j\}\in E} d_i^{-1/2}d_j^{-1/2}.\]
	\begin{theorem}
		Let $G$ be a connected graph, then 
		\[L\mathcal{E}(G) \geq 2R_{-1/2}(G),\]
	\end{theorem}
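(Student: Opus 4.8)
The plan is to deduce the bound from the first statement of Lemma \ref{lemma:CS-AGMeanL}, namely $L\mathcal{E}_G(v_i)\,L\mathcal{E}_G(v_j)\ge 1$ for adjacent $v_i,v_j$, combined with a weighted arithmetic--geometric mean inequality. Since $G$ is connected every vertex has $d_i\ge 1$, so the quantities $L\mathcal{E}_G(v_i)/d_i$ are well defined.

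First I would fix an edge $\{v_i,v_j\}\in E$ and apply the inequality of arithmetic and geometric means to $L\mathcal{E}_G(v_i)/d_i$ and $L\mathcal{E}_G(v_j)/d_j$:
\[\frac{L\mathcal{E}_G(v_i)}{d_i}+\frac{L\mathcal{E}_G(v_j)}{d_j}\ \ge\ \frac{2\sqrt{L\mathcal{E}_G(v_i)\,L\mathcal{E}_G(v_j)}}{\sqrt{d_i d_j}}\ \ge\ \frac{2}{\sqrt{d_i d_j}},\]
where the last step uses Lemma \ref{lemma:CS-AGMeanL}.

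Next I would sum this over all edges $\{v_i,v_j\}\in E$. The right-hand side becomes $2R_{-1/2}(G)$ by definition. For the left-hand side I would re-index the sum by vertices: the term $L\mathcal{E}_G(v_k)/d_k$ occurs once for each of the $d_k$ edges incident to $v_k$, so the left-hand side collapses to $\sum_{k=1}^n d_k\,\big(L\mathcal{E}_G(v_k)/d_k\big)=\sum_{k=1}^n L\mathcal{E}_G(v_k)=L\mathcal{E}(G)$ by \eqref{eq:sumOfLEnergies}. Combining the two computations yields $L\mathcal{E}(G)\ge 2R_{-1/2}(G)$.

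There is no real analytic difficulty here; the one point requiring care is the choice of weights. Applying AM--GM directly to $L\mathcal{E}_G(v_i)$ and $L\mathcal{E}_G(v_j)$, as in the second bullet of Lemma \ref{lemma:CS-AGMeanL}, only gives $L\mathcal{E}(G)\ge m$; it is dividing each vertex energy by its degree before averaging that makes the edge sum telescope back to $L\mathcal{E}(G)$ while producing exactly the factors $1/\sqrt{d_id_j}$ that assemble into the Randi\'c index. If one wanted the equality case, it would come from chasing the equality conditions of the Cauchy--Schwarz step in Lemma \ref{lemma:CS-AGMean} together with $d_i\,L\mathcal{E}_G(v_j)=d_j\,L\mathcal{E}_G(v_i)$ on every edge.
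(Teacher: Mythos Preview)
Your proof is correct and follows essentially the same route as the paper: rewrite $L\mathcal{E}(G)=\sum_i L\mathcal{E}_G(v_i)$ as the edge sum $\sum_{\{v_i,v_j\}\in E}\bigl(L\mathcal{E}_G(v_i)/d_i+L\mathcal{E}_G(v_j)/d_j\bigr)$, apply the arithmetic--geometric mean inequality on each edge, and finish with the product bound $L\mathcal{E}_G(v_i)L\mathcal{E}_G(v_j)\ge 1$ from Lemma~\ref{lemma:CS-AGMeanL}. Your additional remarks on the choice of weights and the equality case go a bit beyond what the paper records, but the core argument is the same.
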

	\begin{proof}
		This result is again an application of the inequality of arithmetic and geometric means:
		\begin{align*}
			L\mathcal{E}(G)&= \sum_{i=1}^{n} L\mathcal{E}_G(v_i) \\
			&= \sum_{\{v_i,v_j\}\in E} \frac{L\mathcal{E}_G(v_i)}{d_i} +\frac{L\mathcal{E}_G(v_j)}{d_j} \\
			&\geq 2  \sum_{\{v_i,v_j\}\in E} \sqrt{\frac{L\mathcal{E}_G(v_i)L\mathcal{E}_G(v_j)}{d_i d_j}} \\
			&\geq 2  \sum_{\{v_i,v_j\}\in E}\frac{1}{ \sqrt{{d_i d_j}}},
		\end{align*}
		where we have used lemma \ref{lemma:CS-AGMeanL} in the last inequality. 
	\end{proof}
	
	\begin{remark}
		In \cite{ArizmendiArizmendi21}, the energy of an edge $e=\{v_i,v_j\}\in E$ is defined as the sum
		\[\mathcal{E}_G(e)=\frac{\mathcal{E}_G(v_i)}{d_i} +\frac{\mathcal{E}_G(v_j)}{d_j}.\]
		This definition satisfies $\mathcal{E}(G) = \sum_{e\in E}\mathcal{E}_G(e) .$ 
		
		In the previous proof, we decomposed the Laplacian energy of $G$ in a similar fashion, which can serve to define a Laplacian energy of an edge.
	\end{remark}

\section{Normalized Laplacian Energy of a Vertex}
\label{section:NLE}

	We now define and study the energy of a vertex associated to the normalized Laplacian $\mathcal{L}.$ For a connected simple graph $G=(V,E)$ we define the \textit{normalized Laplacian energy} of a vertex $v_i \in V$ as 
	\[\mathcal{LE}_G(v_i) := \phi_i(|\mathcal{L}-I|).\]
	Note that this definition uses the fact that $tr(\mathcal{L})=n .$ 
	
	It is known that the spectrum of $\mathcal{L}$  satisfies
	\[0=\lambda_1(\mathcal{L})\leq \lambda_2(\mathcal{L})\leq \cdots \leq \lambda_n(\mathcal{L})\leq 2,\]
	see for example \cite{Chung97}. This fact, together with Proposition \ref{prop:EnergySpectral} imply 
	\[\mathcal{LE}_G(v_i) \in [0,1].\]
	
	The \textit{normalized Laplacian energy} of a graph was defined in \cite{CaversFallatKirkland10} as $\mathcal{LE}= tr(|\mathcal{L}-I|)$, in there some of its properties were investigated, including its relationship with the Randi\'c index 
	\[R_{-1}(G):=\sum_{\{v_i,v_j\}\in E} d_i^{-1} d_j^{-1}.\]
	
	It is immediate the following identity
	\[\mathcal{LE}(G)=\sum_{i=1}^{n} \mathcal{LE}_G(v_i).\]
	As it was said in section \ref{section:LE}, the energy and Laplacian energy coincide for regular graphs, and in Lemma \ref{lemma:regularGraphs} an analogous result for the vertex energy was stated. We present the corresponding result for the normalized Laplacian energy.
	
	\begin{lemma}
		\label{lemma:regularGraphsNormalized}
		If $G$ is a regular graph with degree $d$, then $\mathcal{E}_G(v_i)=d\mathcal{LE}_G(v_i) $ for every vertex. 
	\end{lemma}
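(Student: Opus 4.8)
The plan is to mimic the proof of Lemma \ref{lemma:regularGraphs}, using the fact that for a $d$-regular graph the normalized Laplacian is just a rescaling of the ordinary adjacency situation. Concretely, if $G$ is $d$-regular then $D = dI$, so $D^{-1/2} = d^{-1/2} I$ and hence
\[\mathcal{L} = I - D^{-1/2} A D^{-1/2} = I - \tfrac{1}{d} A.\]
Therefore $\mathcal{L} - I = -\tfrac{1}{d} A$, and taking absolute values, $|\mathcal{L} - I| = \tfrac{1}{d}|A|$, since scaling a matrix by a positive constant scales its absolute value by the same constant (this follows from $|cB| = (cB(cB)^*)^{1/2} = c(BB^*)^{1/2}$ for $c > 0$).

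From here the computation is immediate: applying the functional $\phi_i$, which is linear, gives
\[\mathcal{LE}_G(v_i) = \phi_i(|\mathcal{L}-I|) = \phi_i\!\left(\tfrac{1}{d}|A|\right) = \tfrac{1}{d}\,\phi_i(|A|) = \tfrac{1}{d}\,\mathcal{E}_G(v_i),\]
and rearranging yields $\mathcal{E}_G(v_i) = d\,\mathcal{LE}_G(v_i)$, as claimed. I would present this in three short lines: first identify $D^{-1/2} = d^{-1/2}I$ and hence $\mathcal{L} - I = -A/d$; then note $|\mathcal{L}-I| = |A|/d$; then apply $\phi_i$ and use linearity.

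I do not anticipate a genuine obstacle here — the only point requiring a moment's care is justifying $|cB| = c|B|$ for $c > 0$, which is a one-line consequence of the definition $|B| = (BB^*)^{1/2}$. One could alternatively route the argument through Proposition \ref{prop:EnergySpectral}, observing that $\mathcal{L}$ and $A$ share the same eigenvectors with eigenvalues related by $\lambda_j(\mathcal{L}) = 1 - \lambda_j(A)/d$, so the weights $U_{ij}^2$ coincide and $|\lambda_j(\mathcal{L}) - 1| = |\lambda_j(A)|/d$; but the matrix-level argument above is cleaner and parallels Lemma \ref{lemma:regularGraphs} exactly. Note that, unlike in the Laplacian case where the energies agree on the nose, the extra factor of $d$ appears precisely because the normalized Laplacian divides the adjacency matrix by $d$ rather than merely shifting it.
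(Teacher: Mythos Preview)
Your proof is correct and follows essentially the same route as the paper: both use $D=dI$ to obtain $\mathcal{L}-I=-A/d$, deduce $|\mathcal{L}-I|=d^{-1}|A|$, and then apply $\phi_i$. The paper's version is a touch terser and omits the justification of $|cB|=c|B|$, but the argument is the same.
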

	\begin{proof}
		We first note that, for a $d$-regular graph:
		\[|\mathcal{L}-I_n| = |I_n -D_G^{-1/2}A_GD_G^{-1/2}-I_n| = |D_G^{-1/2}A_GD_G^{-1/2}|.\]
		Furthermore if $G$ is regular, then $D=dI_n$ and $|\mathcal{L}-I_n| = d^{-1}|A|.$ This way:
		\[\mathcal{LE}_G(v_i)=\phi_i(|L-I_n|) = \phi_i(d^{-1}|A|)= d^{-1}\mathcal{E}_G(v_i). \qedhere\] 
	\end{proof}

	\subsection{Bounds with the Vertex Degrees}
	
		We now give an upper and a lower bound of the normalized Laplacian energy in terms of the degrees of the vertices.
	
		\begin{theorem}
			\label{thm:NLEUpperBound}
			If $G$ is a connected graph, then 
			\[\mathcal{LE}_G(v_i) \leq \sqrt{\frac{1}{d_i} \sum_{w\in V:w \sim v_i} \frac{1}{d_w}}.\]
			The equality holds if and only if $G$ is a star and $v_i$ is its center.
		\end{theorem}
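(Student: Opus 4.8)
The plan is to repeat the Cauchy--Schwarz argument of Theorem \ref{thm:McClellandLaplacian}, now applied to $\mathcal{L}-I$. By Proposition \ref{prop:EnergySpectral} we have $\mathcal{LE}_G(v_i)=\sum_{j=1}^{n}U_{ij}^2\,|\lambda_j(\mathcal{L})-1|$, and applying Cauchy--Schwarz to the vectors $a=(|U_{ij}|)_j$ and $b=(|U_{ij}|\,|\lambda_j(\mathcal{L})-1|)_j$, together with $\sum_j U_{ij}^2=1$, gives $\mathcal{LE}_G(v_i)\le \sqrt{\phi_i((\mathcal{L}-I)^2)}$. It then remains to evaluate this last quantity: since $\mathcal{L}-I=-D^{-1/2}AD^{-1/2}$, we get $(\mathcal{L}-I)^2=D^{-1/2}AD^{-1}AD^{-1/2}$, whose $(i,i)$ entry is $d_i^{-1}(AD^{-1}A)_{ii}=d_i^{-1}\sum_{w\sim v_i}d_w^{-1}$, which is exactly the expression under the square root. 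This proves the inequality.

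For the equality case, equality in Cauchy--Schwarz forces $|\lambda_j(\mathcal{L})-1|$ to equal a common constant $c$ for every $j$ with $U_{ij}\ne 0$. Because $G$ is connected, $0$ is an eigenvalue of $\mathcal{L}$ with eigenvector $(\sqrt{d_1},\dots,\sqrt{d_n})$, all of whose entries are strictly positive; choosing this for the first column of $U$ makes $U_{i1}\ne0$, so $c=|0-1|=1$. Hence every $\lambda_j(\mathcal{L})$ appearing with nonzero weight lies in $\{0,2\}$; in the language of Remark \ref{remark:NonCommutativeDistribution}, the distribution $\mu(\mathcal{L})$ of $\mathcal{L}$ with respect to $\phi_i$ is supported on $\{0,2\}$.

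The decisive step is then to note that $\mathcal{L}(2I-\mathcal{L})$ is positive semidefinite, its eigenvalues being $\lambda_j(\mathcal{L})\bigl(2-\lambda_j(\mathcal{L})\bigr)\ge 0$ since $\lambda_j(\mathcal{L})\in[0,2]$, while its $(i,i)$ entry is $\sum_j U_{ij}^2\,\lambda_j(\mathcal{L})\bigl(2-\lambda_j(\mathcal{L})\bigr)=0$ because $\mu(\mathcal{L})$ is supported on $\{0,2\}$. A positive semidefinite matrix with a zero diagonal entry has the whole corresponding row zero, so $[\mathcal{L}(2I-\mathcal{L})]_{ik}=0$ for all $k$. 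Now $\mathcal{L}(2I-\mathcal{L})=(I-D^{-1/2}AD^{-1/2})(I+D^{-1/2}AD^{-1/2})=I-D^{-1/2}AD^{-1}AD^{-1/2}$, so for $k\ne i$ we get $\sum_{w\sim v_i,\ w\sim v_k}d_w^{-1}=0$, i.e.\ $v_i$ and $v_k$ have no common neighbor. Consequently, if $w\sim v_i$ had a neighbor $u\ne v_i$, then $w$ would be a common neighbor of $v_i$ and $u$, a contradiction; hence every neighbor of $v_i$ is a leaf, and by connectedness $V=\{v_i\}\cup N(v_i)$, so $G$ is a star with center $v_i$. Conversely, for a star with center $v_i$ one checks directly that $\phi_i(\mathcal{L})=1$ and $\phi_i((\mathcal{L}-I)^2)=1$, so $\mu(\mathcal{L})$ has mean $1$, second moment $2$, hence variance $1$, which for a measure on $[0,2]$ with mean $1$ forces $\mu(\mathcal{L})=\tfrac12\delta_0+\tfrac12\delta_2$ and therefore $\mathcal{LE}_G(v_i)=1=\sqrt{d_i^{-1}\sum_{w\sim v_i}d_w^{-1}}$.

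The main obstacle is the ``only if'' half of the equality statement: converting the spectral condition that $\mu(\mathcal{L})$ be supported on $\{0,2\}$ into the combinatorial conclusion that $G$ is a star centered at $v_i$. The positive-semidefiniteness of $\mathcal{L}(2I-\mathcal{L})$ together with the vanishing-diagonal-entry argument is what makes this clean; a direct approach juggling the moments $\phi_i(\mathcal{L}),\phi_i(\mathcal{L}^2),\phi_i(\mathcal{L}^3)$ would be considerably more cumbersome and would not as readily pin down the global structure.
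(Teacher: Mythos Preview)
Your proof of the inequality is identical to the paper's: Cauchy--Schwarz applied to $a=(|U_{ij}|)_j$ and $b=(|U_{ij}|\,|\lambda_j(\mathcal{L})-1|)_j$, followed by the computation $\phi_i\big((\mathcal{L}-I)^2\big)=d_i^{-1}\sum_{w\sim v_i}d_w^{-1}$.

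For the equality case you and the paper agree up to the point where the support of $\mu(\mathcal{L})$ is forced into $\{0,2\}$, but then take genuinely different routes. The paper uses only the \emph{first} moment: since $G$ is connected, $0$ has multiplicity one, so for $j\ge 2$ either $U_{ij}=0$ or $\lambda_j(\mathcal{L})=2$; hence $1=\phi_i(\mathcal{L})=2\sum_{j\ge 2}U_{ij}^2$, and combining with $\sum_{j\ge 2}U_{ij}^2=1-d_i/(2m)$ gives $d_i=m$, i.e.\ every edge of $G$ is incident to $v_i$, so $G$ is a star with center $v_i$. Your route---observing that $\mathcal{L}(2I-\mathcal{L})$ is positive semidefinite with $(i,i)$ entry zero, hence has vanishing $i$-th row, which translates into ``$v_i$ shares no common neighbour with any other vertex''---is correct and gives a nice structural picture, but it is longer than necessary. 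In particular, your closing remark that a moment approach ``would be considerably more cumbersome'' is off the mark here: unlike the Laplacian case (Theorem~\ref{thm:McClellandLaplacian}), only the first moment and orthogonality are needed, and they finish the argument in two lines. Your verification of the converse via the variance-extremal characterisation of $\tfrac12\delta_0+\tfrac12\delta_2$ is a pleasant touch; the paper does not include the converse in the proof but relies on the explicit computation in Section~\ref{section:StarGraphs}.
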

		\begin{proof}
			Let $\mathcal{L}=W\Lambda(\mathcal{L}) W^T$ be the spectral decomposition of $L$; we consider the vectors:
			\[	a = (|W_{i1}|, |W_{i2}|, \cdots, |W_{in}|),\]
			\[b = (|W_{i1}||\lambda_1(\mathcal{L}) - 1|, |W_{i2}||\lambda_2(\mathcal{L}) - 1|, \cdots, |W_{in}||\lambda_n(\mathcal{L}) - 1|).\]
			If we use the Cauchy-Schwarz inequality with these vectors we get:
			\[
			\mathcal{LE}(v_i) = \sum_{j=1}^{n} W_{ij}^2 |\lambda_j(\mathcal{L}) - 1| = a\cdot b \leq \sqrt{\sum_{j=1}^n |W_{ij}|^2 \sum_{j=1}^n |W_{ij}|^2 |\lambda_j(\mathcal{L}) - 1|^2},
			\]
			where the first sum inside the sqare root is equal to $1$, and the second sum is equal to \[[|\mathcal{L}-I|^2]_{ii} = [(\mathcal{L}-I)^2]_{ii}= [D^{-1/2}AD^{-1}AD^{-1/2}]_{ii},\]
			which can be computed as
			\begin{align*}
				[|\mathcal{L}-I|^2]_{ii} &=
				\sum_{j,k,l,m=1}^n  [D^{-1/2}]_{ij}A_{jk}[D^{-1}]_{kl}A_{lm}[D^{-1/2}]_{mi} \\
				&=d_i^{-1}\sum_{k=1}^n  A_{ik}[d_k^{-1}]A_{ki}\\
				&= \frac{1}{d_i} \sum_{k:v_k \sim v_i} \frac{1}{d_k},
			\end{align*}
			and the inequality is proved.
			
			For the equality to hold, it must be the case that $a$ and $b$ are parallel, which means that $|\lambda_j(\mathcal{L}) - 1| = |\lambda_k(\mathcal{L}) - 1|$ for every $j,k$ such that $W_{ik} \neq 0  \neq W_{ij}.$ This, together with the fact that $0$ is always an eigenvalue of $\mathcal{L}$ with an eigenvector such that $W_{i1} \neq 0$ (and also with multiplicity one, because $G$ is connected),   have as a consequence the result that the eigenvalues of $\mathcal{L},$ such that $W_{ik} \neq 0,$ are contained in $\{0, 2\}$. 
			
			The fact that we can choose $W_{i1} \neq 0$ for every $i$, follows because one eigenvector associated to the eigenvalue $0$ is given by $D^{1/2}\textbf{1}$ (where $\textbf{1}$ is the vector with all its entries equal to 1) and so we can choose as the first column of $W$ the vector:
			\[\left(\sqrt{\frac{d_1}{2m}}, \sqrt{\frac{d_2}{2m}}, \cdots, \sqrt{\frac{d_n}{2m}}\right).\]
			From the orthogonality of $W$, we can write
			\begin{equation}
				\label{eq:orthogonalityOfW}
				\sum_{k=2}^n W_{ik}^2 + \frac{d_i}{2m} = 1, \text{ for every } i \in \{1,\cdots,n\},
			\end{equation} 
			and using the first moment of $\mathcal{L}$ with respect to $\varphi_i$:
			\begin{equation}
				\label{eq:FirstMomentL}
				\varphi_i(\mathcal{L}) = 1 = 2\sum_{k=2}^n W_{ik}^2,
			\end{equation}
			This implies that $\sum_{k=2}^n W_{ik}^2 = 1/2$, and, together with the equation \eqref{eq:orthogonalityOfW}, imply $d_i = m$: that is to say $v_i$ is the center of a star.
		\end{proof}

		\begin{remark}
			\label{remark:improvement2}
			We can compare the previous bound with the inequality given in Lemma 1 in \cite{CaversFallatKirkland10} as follows:
			\[\mathcal{LE}(G) = \sum_{i=0}^n \mathcal{LE}_G(v_i)\leq \sum_{i=0}^{n} \sqrt{\frac{1}{d_i} \sum_{w:w \sim v_i} \frac{1}{d_w}} \leq \sqrt{2nR_{-1}(G)}.\]
		\end{remark}
		
		\begin{theorem}
			\label{thm:NLELowerBound}
			If $G$ is a connected graph, then 
			\[\mathcal{LE}_G(v_i) \geq \frac{1}{d_i} \sum_{w:w \sim v_i} \frac{1}{d_w}.\]
			 {The equality holds if and only if $G$ is a complete bipartite graph}.
		\end{theorem}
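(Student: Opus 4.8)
The plan is to prove the inequality exactly as in Theorem~\ref{thm:LowerLaplacian}. Since the spectrum of $\mathcal L$ is contained in $[0,2]$, each $\lambda_j(\mathcal L)-1$ lies in $[-1,1]$, so $|\lambda_j(\mathcal L)-1|\ge(\lambda_j(\mathcal L)-1)^2$; writing the spectral decomposition $\mathcal L=W\Lambda(\mathcal L)W^T$ and using Proposition~\ref{prop:EnergySpectral},
\[
\mathcal{LE}_G(v_i)=\sum_{j=1}^{n}W_{ij}^2\,\bigl|\lambda_j(\mathcal L)-1\bigr|\;\ge\;\sum_{j=1}^{n}W_{ij}^2\,\bigl(\lambda_j(\mathcal L)-1\bigr)^2=\phi_i\!\left((\mathcal L-I)^2\right)=\frac{1}{d_i}\sum_{w\sim v_i}\frac{1}{d_w},
\]
the last equality being precisely the diagonal computation already carried out inside the proof of Theorem~\ref{thm:NLEUpperBound}. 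All the work is in the equality case.

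Because $|t|=t^2$ on $[-1,1]$ exactly when $t\in\{-1,0,1\}$, equality above holds precisely when $\lambda_j(\mathcal L)\in\{0,1,2\}$ for every $j$ with $W_{ij}\ne0$, that is, when the spectral measure $\mu_i=\sum_j W_{ij}^2\,\delta_{\lambda_j(\mathcal L)-1}$ of Remark~\ref{remark:NonCommutativeDistribution} is supported on $\{-1,0,1\}$. The plan is to squeeze two consequences out of this. First: on $\{-1,0,1\}$ one has $t^{2k+1}=t$, so every odd moment of $\mu_i$ equals $\int t\,d\mu_i=[\mathcal L-I]_{ii}=0$; since $\mathcal L-I=-D^{-1/2}AD^{-1/2}$ and $\phi_i\bigl((D^{-1/2}AD^{-1/2})^{2k+1}\bigr)$ is a sum with strictly positive weights over the closed walks of length $2k+1$ based at $v_i$, the vertex $v_i$ lies on no odd closed walk, hence (the graph being connected) $G$ is bipartite. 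Second: put $v_i$ in a part $X$ of the bipartition and write $D^{-1/2}AD^{-1/2}$ in block form with off-diagonal block $C=D_X^{-1/2}BD_Y^{-1/2}$, the normalized biadjacency matrix. The support condition puts $e_i$ in the span of the eigenspaces of $D^{-1/2}AD^{-1/2}$ for the eigenvalues $-1,0,1$; restricting to the $X$-block, and using that the $1$-eigenspace of $CC^T$ is one-dimensional and spanned by $D_X^{1/2}\mathbf 1$ (as $0$ is a simple eigenvalue of $\mathcal L$), this forces $C^Te_i$ to be a scalar multiple of $D_Y^{1/2}\mathbf 1$. Reading off coordinates, $B_{iw}/\sqrt{d_id_w}$ is a fixed multiple of $\sqrt{d_w}$ for every $w\in Y$; since the $B_{iw}$ are $0$--$1$ and $v_i$ is not isolated, $v_i$ must be adjacent to every vertex of $Y$, and every vertex of $Y$ must have the same degree.

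Imposing equality at every vertex then makes every vertex of $X$ adjacent to every vertex of $Y$, so $G=K_{|X|,|Y|}$; conversely for $K_{a,b}$ the matrix $D^{-1/2}AD^{-1/2}$ has eigenvalues $1$, $-1$ and $0$ (the last with multiplicity $a+b-2$), so $\mathcal L$ has spectrum $\{0,1,2\}$ and each $\mu_i$ is supported on $\{-1,0,1\}$, giving equality everywhere. I expect the delicate point to be exactly this last extraction of bipartiteness and of the ``$v_i$ dominates a degree-regular part'' structure from the support of $\mu_i$; everything else is bookkeeping already done in Theorems~\ref{thm:NLEUpperBound} and \ref{thm:LowerLaplacian}. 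I would also flag one subtlety: for a single fixed vertex the support condition on $\mu_i$ is strictly weaker than having the whole $\mathcal L$-spectrum in $\{0,1,2\}$ (for instance the centre of $P_5$ attains equality although $P_5$ is not complete bipartite), so the clean statement is that equality holds \emph{at every vertex} if and only if $G$ is complete bipartite — there the columns of $W$ being unit vectors rule out any eigenvalue outside $\{0,1,2\}$, after which $C$ has rank one, $B$ is the all-ones matrix, and $G=K_{a,b}$.
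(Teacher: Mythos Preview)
Your inequality argument is identical to the paper's: both use $|t|\ge t^2$ on $[-1,1]$ together with the diagonal computation of $(\mathcal L-I)^2$ already carried out in Theorem~\ref{thm:NLEUpperBound}.

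For the equality case the two approaches diverge. The paper asserts that equality forces the \emph{entire} spectrum of $\mathcal L$ to lie in $\{0,1,2\}$, then invokes Chung (eigenvalue $2$ present $\Rightarrow$ bipartite) and a multiplicity count ($0$ and $2$ simple, hence $1$ has multiplicity $n-2$, hence rank-one normalized biadjacency) to reach $K_{a,b}$. You instead deduce bipartiteness from the vanishing of all odd moments of $\mu_i$ (no odd closed walks through $v_i$, hence no odd cycles in the connected graph), and then run a block/singular-vector argument on $C=D_X^{-1/2}BD_Y^{-1/2}$ to show that $C^Te_i$ lies in the one-dimensional top singular space, forcing $v_i$ to dominate a degree-regular opposite part.

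Your route is longer but more careful, and your closing remark in fact exposes a genuine gap in the paper's equality statement: equality at a \emph{single} vertex only constrains those $\lambda_j$ with $W_{ij}\ne 0$, not the whole spectrum. Your $P_5$ example is correct --- the centre $v_3$ satisfies $\mathcal{LE}_{P_5}(v_3)=\tfrac12=\tfrac{1}{d_3}\sum_{w\sim v_3}\tfrac{1}{d_w}$ (the eigenvectors of $\mathcal L$ for the two eigenvalues outside $\{0,1,2\}$ are antisymmetric under the path's reflection and vanish at $v_3$), yet $P_5$ is not complete bipartite. So the paper's ``if and only if'' is literally false as stated for a single vertex; your reformulation --- equality at \emph{every} vertex iff $G$ is complete bipartite --- is the right statement, and under that hypothesis both the paper's short spectral/multiplicity argument and your structural argument go through.
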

		\begin{proof}
			Given that $\lambda_i - 1 \in [-1,1]$, we have $(\lambda_i - 1)^2 \leq |\lambda_i - 1|,$ which implies
			\[\mathcal{LE}_G(v) = \sum_{i=0}^{n-1}  V_{ki}^2 |\lambda_i(\mathcal{L}) - 1| \geq \sum_{i=0}^{n-1}  V_{ki}^2 (\lambda_i(\mathcal{L}) - 1)^2 = \phi_i[(\mathcal{L}-I_n)^2]=\frac{1}{d_i} \sum_{w:w \sim v_i} \frac{1}{d_w}. \]
			{The equality happens if and only if $|\lambda_i-1|\in \{0,1\}$, that is, if and only if the spectrum of $\mathcal{L}$ takes values on $\{0,1,2\},$ which can only happen if and only if $G$ is a bipartite graph (Lemma 1.8 of \cite{Chung97}). If $G$ is a bipartite graph, then the spectrum of $\mathcal{L}$ is symmetric with respect to $1$, which together with the fact that $G$ is connected, imply that the multiplicity of $2$ must be $1$, and thus the multiplicity of $1$ must be $n-2$, which can only happen if and only if $G$ is a complete bipartite graph.}
		\end{proof}
		
		The two previous theorems give us as a corollary the following bounds. As we will see in section \ref{section:StarGraphs}, the star graphs attain these bounds.
		
		\begin{corollary}
			\label{cor:NLEDegreeBounds}
			For a connected graph $G=(V,E)$ and a vertex $v_i\in V$ we have
			\[\frac{1}{d_{max}} \leq \mathcal{LE}_G(v_i) \leq \frac{1}{\sqrt{d_{min}}}.\]
		\end{corollary}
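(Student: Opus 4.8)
The plan is to read off both inequalities directly from Theorems \ref{thm:NLEUpperBound} and \ref{thm:NLELowerBound}, which together already sandwich the normalized Laplacian vertex energy:
\[\frac{1}{d_i}\sum_{w:w\sim v_i}\frac{1}{d_w} \;\leq\; \mathcal{LE}_G(v_i) \;\leq\; \sqrt{\frac{1}{d_i}\sum_{w:w\sim v_i}\frac{1}{d_w}}.\]
So it suffices to bound the common quantity $Q_i := \frac{1}{d_i}\sum_{w:w\sim v_i}\frac{1}{d_w}$ from above and below by constants depending only on $d_{min}$ and $d_{max}$, and then feed those bounds into the two sides of the sandwich.

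The key observation is a counting one: the sum defining $Q_i$ ranges over the neighbours of $v_i$, of which there are exactly $d_i$, and each summand obeys $\frac{1}{d_{max}}\leq\frac{1}{d_w}\leq\frac{1}{d_{min}}$ because $d_{min}\leq d_w\leq d_{max}$ holds for every vertex $w$ of $G$. Adding these $d_i$ inequalities gives $\frac{d_i}{d_{max}}\leq\sum_{w:w\sim v_i}\frac{1}{d_w}\leq\frac{d_i}{d_{min}}$, and dividing through by $d_i$ produces $\frac{1}{d_{max}}\leq Q_i\leq\frac{1}{d_{min}}$; here $d_{min}\geq 1$ since $G$ is connected, so all the reciprocals are finite and positive.

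Substituting these estimates into the sandwich finishes the proof: on the lower side one gets $\mathcal{LE}_G(v_i)\geq Q_i\geq\frac{1}{d_{max}}$, and on the upper side $\mathcal{LE}_G(v_i)\leq\sqrt{Q_i}\leq\sqrt{\frac{1}{d_{min}}}=\frac{1}{\sqrt{d_{min}}}$, which are exactly the two claimed bounds. I do not expect any genuine obstacle here; the only point deserving a moment's care is the bookkeeping that $\sum_{w:w\sim v_i}\frac{1}{d_w}$ has precisely $d_i$ terms, so that the prefactor $\frac{1}{d_i}$ cancels this count exactly — which is what renders the final bounds independent of $d_i$ itself.
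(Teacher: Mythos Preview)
Your argument is correct and is exactly the derivation the paper has in mind: the corollary is stated immediately after Theorems~\ref{thm:NLEUpperBound} and~\ref{thm:NLELowerBound} with only the remark that ``the two previous theorems give us as a corollary the following bounds,'' and your bounding of each summand $1/d_w$ by $1/d_{min}$ and $1/d_{max}$ (so that the $d_i$ terms cancel the prefactor $1/d_i$) is precisely the intended step.
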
		
		
		Just as for the Laplacian energy, Lemma \ref{lemma:CS-AGMean} has a precise form for this energy. To state it we need to know that, if the vertices $v_i$ and $v_j$ are adjacent, then $[\mathcal{L}-I]_{ij} = -(d_i d_j)^{-1/2}.$
		
		\begin{lemma}
			\label{lemma:CS-AGMeanNL}
			Let $G$ be a connected graph and $v_i,v_j$ adjacent vertices. Then
			\begin{itemize}
				\item $\mathcal{LE}_G(v_i) \mathcal{LE}_G(v_j)\geq (d_i d_j)^{-1},$
				\item $\mathcal{LE}_G(v_i) +\mathcal{LE}_G(v_j) \geq 2 (d_id_j)^{-1/2}.$
			\end{itemize}
		\end{lemma}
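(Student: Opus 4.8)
The plan is to obtain both inequalities as an immediate corollary of Lemma~\ref{lemma:CS-AGMean} applied to the matrix $M = \mathcal{L}$, exactly as Lemma~\ref{lemma:CS-AGMeanL} was derived for $M = L$. The only thing one actually has to verify is the value of the relevant off-diagonal entry of the shifted matrix, and this has already been recorded in the sentence preceding the statement.

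First I would note that $tr(\mathcal{L}) = n$, so that $M - (tr(M)/n)I = \mathcal{L} - I$ and consequently $M\mathcal{E}_G(v) = \phi_i(|\mathcal{L} - I|) = \mathcal{LE}_G(v)$ for every vertex $v$. Next, for adjacent vertices $v_i \sim v_j$ I would compute, using $A_{ij} = 1$ and $I_{ij} = 0$ (since $i \neq j$),
\[
\left[\mathcal{L} - I\right]_{ij} = \left[\mathcal{L}\right]_{ij} = -\left[D^{-1/2}AD^{-1/2}\right]_{ij} = -d_i^{-1/2}A_{ij}d_j^{-1/2} = -(d_id_j)^{-1/2}.
\]
Substituting this into part (i) of Lemma~\ref{lemma:CS-AGMean} gives $\mathcal{LE}_G(v_i)\mathcal{LE}_G(v_j) \geq \left[\mathcal{L}-I\right]_{ij}^2 = (d_id_j)^{-1}$, and substituting into part (ii) gives $\mathcal{LE}_G(v_i) + \mathcal{LE}_G(v_j) \geq 2\left|\left[\mathcal{L}-I\right]_{ij}\right| = 2(d_id_j)^{-1/2}$, which is what is claimed.

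There is essentially no obstacle here: all the real content sits in Lemma~\ref{lemma:CS-AGMean} (the Cauchy--Schwarz step realized through the vectors $w_i, w_j$ built from the spectral decomposition, followed by AM--GM), and the present lemma just instantiates it for the normalized Laplacian. Should a self-contained argument be preferred, one could instead repeat the Cauchy--Schwarz/AM--GM computation directly with the spectral decomposition $\mathcal{L} = W\Lambda(\mathcal{L})W^T$, but invoking the general lemma is both shorter and keeps the exposition uniform with the Laplacian case.
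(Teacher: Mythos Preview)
Your proposal is correct and follows exactly the paper's approach: the paper presents this lemma as an immediate specialization of Lemma~\ref{lemma:CS-AGMean} to $M=\mathcal{L}$, after recording that $[\mathcal{L}-I]_{ij}=-(d_id_j)^{-1/2}$ for adjacent $v_i,v_j$. There is nothing to add.
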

	
		Now we are in position to give two proofs of the first inequality appearing in Lemma 1 of \cite{CaversFallatKirkland10}, relating the normalized Laplacian energy of $G$ and the Randi\'c index $R_{-1}.$
		
		\begin{theorem}[\cite{CaversFallatKirkland10}]
			Let $G$ be a connected graph, then 
			\[\mathcal{LE}(G) \geq 2R_{-1}(G),\]
		\end{theorem}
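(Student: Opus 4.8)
The plan is to mimic, essentially verbatim, the proof given just above for the inequality $L\mathcal{E}(G)\geq 2R_{-1/2}(G)$, replacing the Laplacian input by its normalized counterpart. The starting point is the additivity $\mathcal{LE}(G)=\sum_{i=1}^n\mathcal{LE}_G(v_i)$, which I would immediately rewrite as a sum over edges by distributing each vertex term over the edges incident to it:
\[
\mathcal{LE}(G)=\sum_{i=1}^n\mathcal{LE}_G(v_i)=\sum_{i=1}^n\sum_{w\sim v_i}\frac{\mathcal{LE}_G(v_i)}{d_i}=\sum_{\{v_i,v_j\}\in E}\left(\frac{\mathcal{LE}_G(v_i)}{d_i}+\frac{\mathcal{LE}_G(v_j)}{d_j}\right).
\]
The only thing to be careful about here is the bookkeeping: each $\mathcal{LE}_G(v_i)/d_i$ appears exactly $d_i$ times in the double sum, so the sum over ordered incidences collapses correctly to a sum over (unordered) edges.

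Next I would apply the arithmetic–geometric mean inequality termwise to get
\[
\frac{\mathcal{LE}_G(v_i)}{d_i}+\frac{\mathcal{LE}_G(v_j)}{d_j}\geq 2\sqrt{\frac{\mathcal{LE}_G(v_i)\,\mathcal{LE}_G(v_j)}{d_i d_j}},
\]
and then feed in the first bullet of Lemma \ref{lemma:CS-AGMeanNL}, namely $\mathcal{LE}_G(v_i)\mathcal{LE}_G(v_j)\geq (d_id_j)^{-1}$, to obtain
\[
2\sqrt{\frac{\mathcal{LE}_G(v_i)\,\mathcal{LE}_G(v_j)}{d_i d_j}}\geq \frac{2}{d_i d_j}.
\]
Summing over all edges gives $\mathcal{LE}(G)\geq 2\sum_{\{v_i,v_j\}\in E}(d_id_j)^{-1}=2R_{-1}(G)$, which is the claim.

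For the promised second proof, I would instead invoke Theorem \ref{thm:NLELowerBound} directly: summing the pointwise lower bound $\mathcal{LE}_G(v_i)\geq d_i^{-1}\sum_{w\sim v_i}d_w^{-1}$ over all vertices yields $\mathcal{LE}(G)\geq\sum_{i=1}^n\sum_{w\sim v_i}(d_id_w)^{-1}$, and the double sum counts each edge $\{v_i,v_j\}$ twice (once from each endpoint), so it equals $2R_{-1}(G)$. I do not expect any genuine obstacle in either argument; the only real care needed is the combinatorial factor of $2$ coming from the edge double-counting, and making sure the hypotheses of Lemma \ref{lemma:CS-AGMeanNL} (connectedness of $G$) and of Theorem \ref{thm:NLELowerBound} are in force, which they are since $G$ is assumed connected.
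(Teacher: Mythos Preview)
Your proposal is correct and matches the paper's own proof essentially verbatim: the paper also gives two arguments, one by summing the vertex-level lower bound of Theorem~\ref{thm:NLELowerBound} and one via the edge decomposition plus AM--GM together with Lemma~\ref{lemma:CS-AGMeanNL}, only with the order of the two proofs reversed.
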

		\begin{proof}
			The first proof follows immediately from theorem \ref{thm:NLELowerBound} by taking the sum over all vertices:
			\[\sum_{v_i\in V} \mathcal{LE}_G(v_i) \geq \sum_{v_i\in V} \frac{1}{d_i} \sum_{w:w \sim v_i} \frac{1}{d_w} = 2 R_{-1(G)}.\]
			
			As the reader might infer, the second proof follows from an application of the inequality of arithmetic and geometric means:
			\begin{align*}
			\mathcal{LE}(G)&= \sum_{i=1}^{n} \mathcal{LE}_G(v_i) \\
			&= \sum_{\{v_i,v_j\}\in E} \frac{\mathcal{LE}_G(v_i)}{d_i} +\frac{\mathcal{LE}_G(v_j)}{d_j} \\
			&\geq 2  \sum_{\{v_i,v_j\}\in E} \sqrt{\frac{\mathcal{LE}_G(v_i)\mathcal{LE}_G(v_j)}{d_i d_j}} \\
			&\geq 2  \sum_{\{v_i,v_j\}\in E}\frac{1}{ {d_i d_j}},
			\end{align*}
			where we have used lemma \ref{lemma:CS-AGMeanNL} in the last inequality. 
		\end{proof}
	
		\begin{remark}[Conjecture]
			In Theorem 17 of \cite{CaversFallatKirkland10} it is proven that \[d_{min} \mathcal{LE}(G) \leq \mathcal{E}(G) \leq d_{max} \mathcal{LE}(G),\]
			where $d_{min}$ and $d_{max}$ are the minimum and maximum degrees of $G$ respectively.
			According to numerical experiments the analog result is true for the energy of vertices, that is:
			\[d_{min} \mathcal{LE}_G(v) \leq \mathcal{E}_G(v) \leq d_{max} \mathcal{LE}_G(v)\]
			for all $v\in V.$
		\end{remark}
	
	\subsection{Bounds with the Cheeger Constants and Ollivier-Ricci Curvature}
	
		In this section we relate the energy of vertices with two geometric quantities. This can be done because the relationship between the spectrum of the normalized Laplacian and those quantities have been studied before. According to Chung \cite{Chung97}, this relationship happens because the matrix $\mathcal{L}$ corresponds in a natural way to the Laplace-Beltrami operator for Riemannian manifolds.
		
		The first quantities of interest are the Cheeger constant and the dual Cheger constant introduced in \cite{BauerJost13}. The \textit{Cheeger constant} is interpreted as a measure of how well can the graph be partitioned into two subgraphs with few edges between them; formally it is defined as:
		\[h(G) := \min_{U\subset V} \frac{|E(U, V\setminus U)|}{\min\{vol(U), vol(V\setminus U)\}},\]
		where $U$ is nonempty, $vol(U) := \sum_{v_i\in U}d_i$ and $E(U, V\setminus U) \subset E$ is the set of edges with one vertex in $U$ and the other in $V\setminus U;$ in general, $h(G) \leq 1$. In Theorem 2.3 of \cite{Chung97} can be found a proof of the following inequality
		\begin{equation}
		\label{eq:CheegerInequality}
		\lambda_1 (\mathcal{L})> 1- \sqrt{1-h(G)^2}.
		\end{equation}
		
		A similar quantity, $\bar{h}(G)$ was introduced in \cite{BauerJost13}. It was called the \textit{dual Cheeger constant} and it is defined as
		 \[\bar{h}(G) := \max_{V_1,V_2}\frac{2|E(V_1, V_2)|}{vol(V_1)+vol(V_2)},\]
		 where $V_1,V_2,V\setminus(V_1 \cup V_2)$ form a partition of $V$. Intuitively, this constant can be interpreted as a measure of how far is the G to be bipartite. For us, the utility of $\bar{h}(G)$ lies in the following inequality contained in Theorem 3.2 in \cite{BauerJost13}:
		\begin{equation}
		\label{eq:dualCheegerInequality}
		\lambda_{n} (\mathcal{L}) \leq 1 + \sqrt{1-(1-\bar{h}(G))^2},
		\end{equation} 
		We are in position to prove the next result.
		
		\begin{theorem}
			\label{thm:vertexEnergyAndCheegerConstant}
			Let $G=(V,E)$ be a connected graph, $h(G),\, \bar{h}(G)$ its Cheeger  and dual Cheeger constants. If $1\leq i \leq n$, then
			\[\mathcal{LE}(v_i) \leq \frac{d_i}{2m} + \alpha \left(1-\frac{d_i}{2m}\right),\]
			where $\alpha=\max\bigg\{\sqrt{1-h(G)^2},\sqrt{1-(1-\bar{h}(G))^2}\bigg\}.$
		\end{theorem}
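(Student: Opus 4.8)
The plan is to read off $\mathcal{LE}_G(v_i)$ from its spectral expansion (Proposition \ref{prop:EnergySpectral}) and then to control every term $|\lambda_j(\mathcal{L})-1|$ using the two Cheeger-type inequalities \eqref{eq:CheegerInequality} and \eqref{eq:dualCheegerInequality}. Concretely, I would write the spectral decomposition $\mathcal{L}=W\Lambda(\mathcal{L})W^T$ with, exactly as in the proof of Theorem \ref{thm:NLEUpperBound}, the first column of $W$ taken to be the unit eigenvector $\left(\sqrt{d_1/2m},\ldots,\sqrt{d_n/2m}\right)$ associated with $\lambda_1(\mathcal{L})=0$, so that $W_{i1}^2=d_i/2m$. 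Since $|\lambda_1(\mathcal{L})-1|=1$, Proposition \ref{prop:EnergySpectral} gives
\[\mathcal{LE}_G(v_i)=\sum_{j=1}^{n}W_{ij}^2\,|\lambda_j(\mathcal{L})-1|=\frac{d_i}{2m}+\sum_{j=2}^{n}W_{ij}^2\,|\lambda_j(\mathcal{L})-1|.\]

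Next I would bound $|\lambda_j(\mathcal{L})-1|$ for $j\geq 2$. In the indexing used here, inequality \eqref{eq:CheegerInequality} says $\lambda_2(\mathcal{L})>1-\sqrt{1-h(G)^2}$ (Chung's $\lambda_1$ being the smallest nonzero eigenvalue), so for every $j\geq 2$ one has $\lambda_j(\mathcal{L})-1>-\sqrt{1-h(G)^2}\geq -\alpha$; and by \eqref{eq:dualCheegerInequality}, $\lambda_j(\mathcal{L})-1\leq \lambda_n(\mathcal{L})-1\leq \sqrt{1-(1-\bar{h}(G))^2}\leq \alpha$. Hence $|\lambda_j(\mathcal{L})-1|\leq \alpha$ for all $j\geq 2$. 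Combining this with $\sum_{j=1}^{n}W_{ij}^2=1$ (Remark \ref{remark:NonCommutativeDistribution}), so that $\sum_{j=2}^{n}W_{ij}^2=1-d_i/2m$, yields
\[\mathcal{LE}_G(v_i)\leq \frac{d_i}{2m}+\alpha\sum_{j=2}^{n}W_{ij}^2=\frac{d_i}{2m}+\alpha\left(1-\frac{d_i}{2m}\right),\]
which is the asserted bound.

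There is essentially no hard analytic step; the only point requiring care is the bookkeeping between Chung's eigenvalue convention and the one adopted in this paper, and the observation that the single constant $\alpha=\max\{\sqrt{1-h(G)^2},\sqrt{1-(1-\bar{h}(G))^2}\}$ simultaneously dominates the two-sided deviation of the nontrivial eigenvalues from $1$. Once that is settled, the estimate is an immediate consequence of Proposition \ref{prop:EnergySpectral} and the normalization $W_{i1}^2=d_i/2m$.
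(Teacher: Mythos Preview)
Your proof is correct and follows essentially the same approach as the paper: separate the contribution of the zero eigenvalue (using $W_{i1}^2=d_i/2m$), bound $|\lambda_j(\mathcal{L})-1|\le\alpha$ for the remaining eigenvalues via the Cheeger and dual Cheeger inequalities, and conclude using $\sum_j W_{ij}^2=1$. The only difference is cosmetic (indexing conventions), which you already flagged.
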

		\begin{proof}
			Combining inequalities \ref{eq:CheegerInequality} and \ref{eq:dualCheegerInequality} we arrive at
			\[ 1- \sqrt{1-h(G)^2}\leq \lambda_1(\mathcal{L}) \leq \cdots \leq \lambda_{n-1}(\mathcal{L}) \leq 1 + \sqrt{1-(1-\bar{h}(G))^2},\]
			which implies $|\lambda_j(\mathcal{L})-1| \leq \alpha $ for $j=1,\cdots,n-1$. So we can proceed as follows, using the spectral decomposition of $\mathcal{L}$ and proposition \ref{prop:EnergySpectral},
			\begin{align*}
			\mathcal{LE}(v_i) &= \sum_{j=0}^{n-1}  V_{ij}^2 |\lambda_j(\mathcal{L}) - 1|\\
			&= \frac{d_i}{2m} + \sum_{j=1}^{n-1}  V_{ij}^2 |\lambda_j(\mathcal{L}) - 1| \\
			&\leq \frac{d_i}{2m} + \alpha\sum_{j=1}^{n-1}  V_{ij}^2 \\
			&=  \frac{d_i}{2m} + \alpha \left(1-\frac{d_i}{2m}\right),
			\end{align*}
		where in the second equality we used that $D^{1/2}\mathbf{1}$ is an eigenvector of $\mathcal{L}$ with eigenvalue $0$, with $\mathbf{1}$ a vector with all its entries equal to $1$ and so we can take $V_{0i}^2 =d_i/2m$.
		\end{proof}
		
		Adding the normalized Laplacian energies of the graph we arrive at the following bound for the normalized Laplacian energy.
		
		\begin{corollary}
			Let $G=(V,E)$ be a connected graph, $h(G), \bar{h}(G)$ its Cheeger  and dual Cheeger constants, and $\alpha=\max\bigg\{\sqrt{1-h(G)^2},\sqrt{1-(1-\bar{h}(G))^2}\bigg\}.$ Then
			\[\mathcal{LE}(G)\leq 1+\alpha(n-1).\]
		\end{corollary}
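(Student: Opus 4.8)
The plan is simply to sum the per-vertex estimate of Theorem~\ref{thm:vertexEnergyAndCheegerConstant} over all vertices of $G$ and then apply the handshaking identity $\sum_{i=1}^{n}d_i = 2m$; no new ideas beyond those already in the proof of that theorem are required.

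First I would recall the additivity identity $\mathcal{LE}(G)=\sum_{i=1}^{n}\mathcal{LE}_G(v_i)$ stated earlier in this section. Applying Theorem~\ref{thm:vertexEnergyAndCheegerConstant} to each term (note that $\alpha$ is a single constant depending only on $h(G)$ and $\bar h(G)$, not on $i$) gives
\[
\mathcal{LE}(G)\;\le\;\sum_{i=1}^{n}\left[\frac{d_i}{2m}+\alpha\left(1-\frac{d_i}{2m}\right)\right].
\]
Then I would split the right-hand side. The first piece is $\sum_{i=1}^{n}\frac{d_i}{2m}=\frac{1}{2m}\sum_{i=1}^{n}d_i=1$ by the handshaking lemma, and the second is $\alpha\sum_{i=1}^{n}\left(1-\frac{d_i}{2m}\right)=\alpha\left(n-1\right)$ for the same reason. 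Adding the two pieces yields $\mathcal{LE}(G)\le 1+\alpha(n-1)$, which is the claim.

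There is essentially no obstacle: the only points worth care are that $\alpha$ factors cleanly out of the sum (being index-independent) and that the normalized weights $d_i/(2m)$ sum to one. If desired, one could further remark that equality forces every vertex to be simultaneously extremal in Theorem~\ref{thm:vertexEnergyAndCheegerConstant}, but this characterization is not needed for the stated bound.
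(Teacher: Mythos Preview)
Your proposal is correct and matches the paper's approach exactly: the paper simply states that the corollary follows by ``adding the normalized Laplacian energies of the graph,'' i.e., summing the vertex bound of Theorem~\ref{thm:vertexEnergyAndCheegerConstant} over all $v_i$ and using $\sum_i d_i=2m$. Your write-up actually spells out the computation more carefully than the paper does.
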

		
		The second geometric quantity of our interest is the Ollivier-Ricci curvature introduced in \cite{Ollivier09}. In order to define it, we need to introduce the following family of probability measures; let $v\in V$, then
		\[
		m_v(w) := \begin{cases}
		\frac{1}{d_v} & \text{ if } w\sim v,\\
		0 & \text{ otherwise;}
		\end{cases}
		\]
		these measures can be interpreted as the transition probabilities of a random walk in the vertices of the graph. The Ollivier-Ricci curvature between two vertices $v$ and $w$ is defined as
		\[\kappa(v,w):= 1 - \frac{W_1(m_v,m_w)}{d(v,w)},\]
		where $d(v,w)$ is the minimum number of edges in a path joining $v$ and $w$ and $W_1(m_v,m_w)$ denotes the Wasserstein distance (or transportation distance) between $m_x$ and $m_y$, formally
		\[W_1(m_v,m_w): = \min_{\pi} \sum_{(x,y)\in V\times V} d(x,y)\pi(x,y),\]
		where the minimum is taken over all couplings (or transport plans) between $m_v$ and $m_w,$ that is, over all measures $\pi$ on $V\times V$ with marginals $m_v$ and $m_w$. Intuitively, $W_1(m_v,m_w)$ is the minimum cost of transporting the mass of $m_v$ to $m_y$ with the distance as the cost function. 
		
		We can give a bound on the normalized Laplacian energy of a vertex in terms of a bound on the Ollivier-Ricci Curvature $\kappa$ because of the following result:
		
		\begin{theorem}[\cite{Ollivier09}]
			\label{thm:spectralGapOllivier}
			Let a graph $G=(V,E)$ be such that $\kappa (x,y)\geq k,$ $\forall \{x,y\} \in E,$ then the eigenvalues of the normalized graph Laplace operator $\mathcal{L}$ satisfy
			\[k\leq \lambda_1(\mathcal{L}) \leq \cdots \leq \lambda_{n-1}(\mathcal{L}) \leq 2-k. \]
		\end{theorem}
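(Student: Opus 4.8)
The plan is to translate the hypothesis on Ollivier--Ricci curvature into a contraction estimate for the averaging (random walk) operator and then read the spectrum off from that. Write $P:=D^{-1}A$ for the operator $(Pf)(v)=\sum_{w}m_v(w)f(w)=\tfrac1{d_v}\sum_{w\sim v}f(w)$. The first step is a reduction: since $\mathcal{L}=D^{1/2}(I-P)D^{-1/2}$, the matrices $\mathcal{L}$ and $I-P$ are similar and share the same spectrum, which is real because $P$ is in turn similar to the symmetric matrix $D^{-1/2}AD^{-1/2}$. Writing $1=\mu_1\ge\mu_2\ge\cdots\ge\mu_n$ for the eigenvalues of $P$, the value $\mu_1=1$ is the unique one whose eigenspace contains the constant functions (here connectedness of $G$ is used), and $\lambda_j(\mathcal{L})=1-\mu_j$. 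So it suffices to show $|\mu|\le 1-k$ for every eigenvalue $\mu$ of $P$ that admits a non-constant eigenfunction; then $1-\mu\in[k,\,2-k]$, which is the claim for the nontrivial eigenvalues.

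The second step upgrades the edge hypothesis $\kappa(x,y)\ge k$, i.e. $W_1(m_x,m_y)\le(1-k)d(x,y)=1-k$ for $\{x,y\}\in E$, to the same bound for all pairs of vertices. Given $x,y\in V$, fix a geodesic $x=x_0\sim x_1\sim\cdots\sim x_\ell=y$ with $\ell=d(x,y)$; since $W_1$ is a metric on probability measures, the triangle inequality yields
\[
W_1(m_x,m_y)\ \le\ \sum_{i=1}^{\ell}W_1(m_{x_{i-1}},m_{x_i})\ \le\ \ell(1-k)\ =\ (1-k)\,d(x,y).
\]
Then Kantorovich--Rubinstein duality gives $W_1(m_x,m_y)=\sup\{\,|(Pf)(x)-(Pf)(y)|:\ f\ \text{is }1\text{-Lipschitz}\,\}$, where on a connected graph a function is $1$-Lipschitz for the path metric exactly when $|f(u)-f(v)|\le1$ across every edge $uv$. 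Hence $P$ contracts the Lipschitz seminorm $\|f\|_{\mathrm{Lip}}:=\max_{u\sim v}|f(u)-f(v)|$ by the factor $1-k$, i.e. $\|Pf\|_{\mathrm{Lip}}\le(1-k)\|f\|_{\mathrm{Lip}}$ for every $f$.

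The third step is then immediate: if $Pf=\mu f$ with $f$ real and non-constant, then $\|f\|_{\mathrm{Lip}}>0$ and $|\mu|\,\|f\|_{\mathrm{Lip}}=\|Pf\|_{\mathrm{Lip}}\le(1-k)\|f\|_{\mathrm{Lip}}$, so $|\mu|\le 1-k$; applying this to $\mu_2,\dots,\mu_n$ and using $\lambda_j(\mathcal{L})=1-\mu_j$ produces the asserted interval $[k,\,2-k]$. I expect the only genuine work to sit in the middle step: the triangle inequality for the Wasserstein distance and the Kantorovich--Rubinstein duality are the substantive analytic inputs, and one must also be slightly careful about the (elementary but essential) equivalence between being $1$-Lipschitz across edges and being $1$-Lipschitz for the graph distance, since that is exactly what lets the eigenvalue estimate close.
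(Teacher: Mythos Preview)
The paper does not supply its own proof of this theorem: it is quoted from \cite{Ollivier09} (Proposition~30 there, restated for finite graphs as noted via \cite{BauerJostLiu12}), so there is no in-paper argument to compare against.

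Your argument is correct and is essentially Ollivier's original one. The core mechanism---Kantorovich--Rubinstein duality turns the Wasserstein bound $W_1(m_x,m_y)\le(1-k)\,d(x,y)$ into a contraction $\|Pf\|_{\mathrm{Lip}}\le(1-k)\|f\|_{\mathrm{Lip}}$ of the averaging operator $P=D^{-1}A$, and any non-constant real eigenfunction then forces $|\mu|\le 1-k$---is precisely how the source proceeds. Two minor remarks. First, the triangle-inequality upgrade from edges to all pairs is not strictly needed: since $\|Pf\|_{\mathrm{Lip}}=\max_{u\sim v}|(Pf)(u)-(Pf)(v)|$, it suffices to apply duality and the edge hypothesis $W_1(m_u,m_v)\le 1-k$ directly on adjacent $u,v$; the equivalence between ``$1$-Lipschitz across edges'' and ``$1$-Lipschitz for the path metric'' (which you flag) is what makes the duality step legitimate. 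Second, your line ``$\lambda_j(\mathcal{L})=1-\mu_j$'' should be read against the paper's local convention in this subsection that the trivial eigenvalue is $\lambda_0(\mathcal{L})=0$, so the correspondence is $\lambda_{j-1}(\mathcal{L})=1-\mu_j$ for $j=1,\dots,n$; this is purely notational and does not affect the logic.
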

		According to \cite{BauerJostLiu12}, this theorem is just Proposition 30 of \cite{Ollivier09} stated finite graphs. Using Theorem \ref{thm:spectralGapOllivier} we can prove the following:
		
		\begin{theorem}
			\label{thm:vertexEnergyAndCurvature}
			Let a graph $G=(V,E)$ be such that $\kappa (v,w)\geq k,$ $\forall \{v,w\} \in E,$ then 
			\[\mathcal{LE}(v_i) \leq 1-k\left(1-\frac{d_i}{2m}\right)\]
			for every vertex $v_i \in V.$
		\end{theorem}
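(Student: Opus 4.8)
The plan is to follow verbatim the strategy used in the proof of Theorem \ref{thm:vertexEnergyAndCheegerConstant}, replacing the Cheeger/dual-Cheeger two-sided bound on the nontrivial spectrum by the one coming from the curvature hypothesis. First I would invoke Theorem \ref{thm:spectralGapOllivier}: since $\kappa(v,w)\geq k$ for every edge, the eigenvalues of $\mathcal{L}$ satisfy $k\leq \lambda_1(\mathcal{L})\leq\cdots\leq\lambda_{n-1}(\mathcal{L})\leq 2-k$ (with $\lambda_0(\mathcal{L})=0$ the trivial eigenvalue, which is simple because $G$ is connected). From $\lambda_j(\mathcal{L})\in[k,2-k]$ one gets immediately $|\lambda_j(\mathcal{L})-1|\leq 1-k$ for $j=1,\dots,n-1$; note $k\leq 1$ is forced, so $1-k\geq 0$ and this is a genuine bound.

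Next I would write out $\mathcal{LE}(v_i)$ via the spectral decomposition $\mathcal{L}=W\Lambda(\mathcal{L})W^T$ and Proposition \ref{prop:EnergySpectral}:
\[
\mathcal{LE}(v_i)=\sum_{j=0}^{n-1}W_{ij}^2\,|\lambda_j(\mathcal{L})-1|.
\]
Here I would separate off the $j=0$ term exactly as in Theorem \ref{thm:vertexEnergyAndCheegerConstant}: the eigenvector associated to $\lambda_0(\mathcal{L})=0$ can be taken to be the normalization of $D^{1/2}\mathbf{1}$, so $W_{i0}^2=d_i/(2m)$, and $|\lambda_0(\mathcal{L})-1|=1$. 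For the remaining terms I would apply the bound $|\lambda_j(\mathcal{L})-1|\leq 1-k$ and then use $\sum_{j=1}^{n-1}W_{ij}^2 = 1-d_i/(2m)$, which follows from the orthogonality of $W$ (this is equation \eqref{eq:orthogonalityOfW}). This yields
\[
\mathcal{LE}(v_i)\leq \frac{d_i}{2m}+(1-k)\!\left(1-\frac{d_i}{2m}\right)=1-k\!\left(1-\frac{d_i}{2m}\right),
\]
which is the claimed inequality.

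There is essentially no obstacle here: the argument is a direct transcription of the proof of Theorem \ref{thm:vertexEnergyAndCheegerConstant}, the only change being the source of the spectral confinement $[k,2-k]$ in place of $[1-\sqrt{1-h^2},\,1+\sqrt{1-(1-\bar h)^2}]$. The one point worth stating carefully is the choice $W_{i0}^2=d_i/(2m)$ and the fact that $\lambda_0(\mathcal{L})=0$ is simple, so that precisely one term is excluded from the estimate $|\lambda_j(\mathcal{L})-1|\leq 1-k$; both of these were already established in Section \ref{section:NLE}. If desired, one could also add the immediate corollary $\mathcal{LE}(G)\leq n-k(n-1)$ obtained by summing over all vertices and using $\sum_i d_i=2m$.
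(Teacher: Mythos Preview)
Your proposal is correct and is essentially the same as the paper's proof: both invoke Theorem~\ref{thm:spectralGapOllivier} to obtain $|\lambda_j(\mathcal{L})-1|\leq 1-k$ for the nontrivial eigenvalues, split off the $j=0$ term using $W_{i0}^2=d_i/(2m)$, and then apply Proposition~\ref{prop:EnergySpectral} together with the orthogonality relation $\sum_{j\geq 1}W_{ij}^2=1-d_i/(2m)$. The corollary you suggest adding is also exactly the one the paper states immediately after this theorem.
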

		\begin{proof}
			On the one hand, Theorem \ref{thm:spectralGapOllivier} implies 
			\[|\lambda_1(\mathcal{L})-1|\leq 1-k,\; \text{ for } i=1,\cdots,n-1.\]
			On the other, proposition \ref{prop:EnergySpectral} allow us to write
			\begin{align*}
			\mathcal{LE}(v_i) &= \sum_{j=0}^{n-1}  V_{ij}^2 |\lambda_j(\mathcal{L}) - 1|\\
			&= \frac{d_i}{2m} + \sum_{j=1}^{n-1}  V_{ij}^2 |\lambda_j(\mathcal{L}) - 1| \\
			&\leq \frac{d_i}{2m} + (1-k)\sum_{j=1}^{n-1}  V_{ij}^2 \\
			&=  \frac{d_i}{2m} + (1-k) \left(1-\frac{d_i}{2m}\right) \\
			&= 1-k\left(1-\frac{d_i}{2m}\right),
			\end{align*}
			where we have used $V_{0i}^2 =d_i/2m$ for every $i$.
		\end{proof}
		We note that this last bound is only meaningful if $k\in (0,1).$ We finish this section by relating the normalized Laplacian energy of the graph with a bound on the Ollivier-Ricci curvature.
		
		\begin{corollary}
			Let a graph $G=(V,E)$ be such that $\kappa (v,w)\geq k,$ $\forall \{v,w\} \in E,$ then 
			\[\mathcal{LE}(G) \leq n-k\left(n-1\right).\]
		\end{corollary}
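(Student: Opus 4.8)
The plan is to derive this corollary simply by summing the per-vertex bound of Theorem \ref{thm:vertexEnergyAndCurvature} over all vertices of $G$, exactly in the spirit of the analogous corollaries following Theorems \ref{thm:vertexEnergyAndCheegerConstant} and \ref{thm:NLELowerBound}. The starting point is the additivity identity $\mathcal{LE}(G) = \sum_{i=1}^{n} \mathcal{LE}_G(v_i)$ recorded earlier in Section \ref{section:NLE}, so that the global bound is controlled once each summand is controlled.

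First I would invoke Theorem \ref{thm:vertexEnergyAndCurvature}, whose hypothesis $\kappa(v,w)\geq k$ for all edges $\{v,w\}$ is precisely the hypothesis of the corollary, to get $\mathcal{LE}_G(v_i) \leq 1 - k\left(1 - \tfrac{d_i}{2m}\right)$ for every $i$. Summing over $i = 1,\dots,n$ gives
\[
\mathcal{LE}(G) = \sum_{i=1}^{n}\mathcal{LE}_G(v_i) \leq \sum_{i=1}^{n}\left[1 - k\left(1-\frac{d_i}{2m}\right)\right] = n - k\left(n - \frac{1}{2m}\sum_{i=1}^{n} d_i\right).
\]
Then I would use the handshake identity $\sum_{i=1}^{n} d_i = 2m$, which collapses the parenthesised term to $n - 1$, yielding $\mathcal{LE}(G) \leq n - k(n-1)$ as claimed.

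There is really no substantive obstacle here: the argument is a one-line summation plus the elementary degree-sum identity, and it mirrors verbatim the reasoning used for the Cheeger-constant corollary. The only point worth flagging — as the paper already does after Theorem \ref{thm:vertexEnergyAndCurvature} — is that the bound carries content only when $k \in (0,1)$; for $k \leq 0$ it is weaker than the trivial bound $\mathcal{LE}(G)\leq n$ coming from $\mathcal{LE}_G(v_i)\in[0,1]$, and for $k = 1$ (which forces a very rigid structure) it gives $\mathcal{LE}(G)\leq 1$.
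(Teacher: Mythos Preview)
Your proposal is correct and matches the paper's intended approach: the corollary is stated without proof precisely because it follows by summing the per-vertex bound of Theorem~\ref{thm:vertexEnergyAndCurvature} over all vertices and applying $\sum_i d_i = 2m$, exactly as you do. This is the same mechanism the paper uses for the corollary after Theorem~\ref{thm:vertexEnergyAndCheegerConstant}.
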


\section{Examples}
	\label{section:Examples}

	In this section we present formulas for the Laplacian energies of vertices for the star and the path graphs. Observe that, for regular graphs one can easily obtain the Laplacian energies of vertices using the (adjacency) energy with aid of lemmas \ref{lemma:regularGraphs} and \ref{lemma:regularGraphsNormalized}. See section 5.1 in \cite{ArizmendiEtAl18} for examples of this type.
	
	\subsection{Star Graphs}
	\label{section:StarGraphs}
	
	The star graph $S_n$ is a graph with $n$ vertices, $V=\{v_1,v_2,\cdots,v_n\}$, one vertex is connected to every other and there are no more edges, that is $E=\{\{v_1,v_i\}: i=2,\cdots,n\}.$ It is easily seen that 
	$2mn^{-1} = 2(n-1)n^{-1}$.
	
	The spectrum of the Laplacian $L(S_n),$ is $0,$ $1$ (with multiplicity $n-2$) and $n$. Its eigenvectors are $V_1 = (1,1,\cdots,1)$ (with eigenvalue $0$); for every $n-1\geq i\geq 2 $ the vectors ${V}_{i} = (\nu_1,\nu_2,\cdots, \nu_n)$
	given by 
	\[
	\nu_k =\begin{cases}
	1 & k=i\\
	-1 & k=i+1\\
	0 & otherwise,
	\end{cases}
	\] 
	are eigenvectors with eigenvalue $1$; finally, $(-(n-1), 1, 1, \cdots, 1)$ is an eigenvector with eigenvalue $n$.
	
	To compute the Laplacian energy of the vertices using lemma \ref{prop:EnergySpectral}, one needs to find an orthonormal basis of eigenvectors. Nevertheless, in this case there is no need to do that, because applying the Gramm-Schmidt process to the vectors $\{V_i\}_{i=2}^{n-1}$ would lead to vectors whose first component is $0$. By lemma \ref{prop:EnergySpectral} we have
	
	\begin{align*}
	L\mathcal{E}_{S_n}(v_1) 
	&= \frac{1}{n} \bigg|0-2 \frac{n-1}{n} \bigg| + \frac{n-1}{n} \bigg|n -2 \frac{n-1}{n} \bigg| \\
	&= \frac{n-1}{n^2}(n^2 - 2n + 4).
	\end{align*}
	
	The rest of the vertices have all the same Laplacian energy. So, by equation \ref{eq:sumOfLEnergies}, one would have 
	\begin{align*}
	L\mathcal{E}_{S_n}(v_1) + (n-1)L\mathcal{E}_{S_n}(v_2) &= L\mathcal{E}(S_n) \\
	&= \frac{2n-2}{n} +(n-2) \bigg|1-\frac{2n-2}{n}\bigg| +\bigg|n - \frac{2n-2}{n}\bigg| \\
	&=  \frac{(n-2)^2}{n}	+ n,
	\end{align*}
	which leads to
	\begin{align*}
	LE(v_2) 
	&= \frac{n^3 - n^2 - 2n + 4}{n^2(n-1)}.
	\end{align*}
	We can conclude:
	\[
	L\mathcal{E}_{S_n}(v_k) = \begin{cases}
	(n-1)(n^2 - 2n + 4)n^{-2} & \text{ if } k=1,\\
	(n^3 - n^2 - 2n + 4)n^{-2}(n-1)^{-1} & \text{ if } k \geq 2.
	\end{cases}
	\]
	Note that: 
	\[LE(v_1) \neq (n-1)LE(v_2),\]
	so, in general, Proposition 3.9 of \cite{ArizmendiEtAl18} is not true for the Laplacian energy of a vertex.	
	
	The same procedure as above give us the result for the normalized Laplacian energy of vertices, however we can use the bounds given in theorems \ref{thm:NLEUpperBound} and \ref{thm:NLELowerBound} to compute the normalized Laplacian energy of $v_1$:
	
	\[1 = \frac{1}{d_1} \sum_{w:w \sim v_1} \frac{1}{d_w} \leq \mathcal{LE}_{S_n}(v_1) \leq \sqrt{\frac{1}{d_1} \sum_{w:w \sim v_1} \frac{1}{d_w}} = 1.\]
	For $2\leq i \leq n$, knowing that the spectrum of the normalized Laplacian of $S_n$ is $0$, $1$ (with multiplicity $n-2$) and $2$:
	\begin{align*}
	\mathcal{LE}_{S_n}(v_1) + (n-1)\mathcal{LE}_{S_n}(v_2) &= \mathcal{LE}(S_n) =2,
	\end{align*}
	which leads to $\mathcal{LE}_{S_n}(v_i) = (n-1)^{-1}$ for $2\leq i \leq n.$ Finally:
	\[
	\mathcal{LE}_{S_n}(v_k) = \begin{cases}
	1 & \text{ if } k=1,\\
	(n-1)^{-1} & \text{ if } k \geq 2.
	\end{cases}
	\]
	As we see, the vertices of the Star graph attain the bounds given in corollary \ref{cor:NLEDegreeBounds}.
	\subsection{Path Graphs}
	
	Let $P_n =(V, E)$ be the path graph in $n$ vertices. That is, $V=\{1,2,\cdots, n\},$ and $E=\{(a,a+1):1\leq a < n\}.$ The mean degree of $P_n$ is
	\[ \frac{2m}{n} = \frac{1}{n} \sum_{i=1}^{n} d_i = 
	\frac{1}{n} ( 2 + 2(n-2)) = 
	\frac{1}{n} ( 2 + 2n - 4) = \frac{1}{n} ( 2n - 2) .
	\]
	
	According to Lemma 6.6.1 in \cite{Spielman2019}, the eigenvalues of $P_n$ are $2(1-\cos(\pi k/n)),$ with eigenvectors
	\[V_k(a) = \cos(\pi k a /n - \pi k/2n),\]
	for $0\leq k < n,$ we note that all the eigenvalues are distinct, because the function $x \mapsto 2(1-\cos(\pi x/n))$ is increasing in the interval $[0,n].$

	If $k=0,$ the norm of $V_k$ is $\sqrt{n},$ else
	\begin{align*}
	\|V_k \|^2 &= \sum_{a=1}^n\cos^2(\pi k a /n - \pi k/2n) 
	= \sum_{a=1}^n \frac{1}{2} \left[ \cos \left(\frac{2\pi k}{n}a - \frac{\pi k}{n}\right) + 1
	\right] \\
	&= \frac{n}{2}+ \frac{1}{2}\sum_{a=1}^n \cos \left(\frac{2\pi k}{n}a - \frac{\pi k}{n}\right)\\
	&= \frac{n}{2}+ \frac{1}{2} \left( \frac{\sin\left(-\frac{n+1}{2} \frac{2\pi k}{n} \right) \cos\left(-\frac{\pi k}{n}+2\pi k\right)}{\sin(-\pi k /n)} - \cos\left(-\frac{\pi k}{´n}\right)	\right)\\
	&= \frac{n}{2}+ \frac{1}{2} \left( \frac{\sin\left(- \frac{\pi k}{n} \right) \cos\left(-\frac{\pi k}{n}\right)}{\sin(-\pi k /n)} - \cos\left(-\frac{\pi k}{´n}\right)	\right)= \frac{n}{2}.
	\end{align*} 
	Finally, we can use lemma \ref{prop:EnergySpectral} to compute the energy of the vertices.
	\begin{align*}
	L\mathcal{E}_{P_n}(v_k) &= \sum_{i=0}^{n-1}  V_{ki}^2 |\mu_i - (2m/n)|\\
	&= \frac{1}{n} \frac{2(n-1)}{n} +\sum_{i=1}^{n-1}  \frac{2\cos^2(\pi i k /n - \pi i/2n)}{n} \bigg|2(1-\cos(\pi i /n))- \frac{2(n-1)}{n}\bigg|\\
	&= 2\frac{n-1}{n^2} +4\sum_{i=1}^{n-1}  \frac{\cos^2(\pi i k /n - \pi i/2n)}{n} \bigg|\frac{1}{n}-\cos(\pi i /n)\bigg|.\\
	\end{align*}
	
	{\color{red}
	}

\bibliographystyle{plain}
\bibliography{bibVertex} 

\end{document}